\theoremstyle{thmstyleone}%
\newtheorem{theorem}{Theorem}[section]
\newtheorem{example}{Example}[section]
\newtheorem{remark}{Remark}[section]
\newtheorem{lemma}{Lemma}[section]
\newtheorem{definition}{Definition}[section]
\numberwithin{equation}{section}
\begin{document}

\title[A reconsideration of quasimonotone variational inequality problems]{A reconsideration of quasimonotone variational inequality problems}


\author[1]{\fnm{Meiying} \sur{Wang}}\email{xdwangmeiying@163.com}

\author*[1]{\fnm{Hongwei} \sur{Liu}}\email{hwliuxidian@163.com}

\author[2]{\fnm{Jun} \sur{Yang}}\email{xysyyangjun@163.com}

\affil[1]{\orgdiv{School of Mathematics and Statistics}, \orgname{Xidian University}, \orgaddress{\city{Xi'an}, \postcode{710126}, \state{Shaanxi}, \country{China}}}


\affil[2]{\orgdiv{School of Mathematics and Statistics}, \orgname{Xianyang Normal University}, \orgaddress{\city{Xianyang}, \postcode{712000}, \state{Shaanxi}, \country{China}}}


\abstract{This paper is based on Tseng's exgradient algorithm for solving variational inequality problems in real Hilbert spaces. Under the assumptions that the cost operator is quasimonotone and Lipschitz continuous, we establish the strong convergence, sublinear convergence, and Q-linear convergence of the algorithm. The results of this paper provide new insights into quasimonotone variational inequality problems, extending and enriching existing results in the literature. Finally, we conduct numerical experiments to illustrate the effectiveness and implementability of our proposed condition and algorithm.}
\keywords{Variational inequality problem, Quasimonotone mapping, Strong convergence, Convergence rate, Projection method}


\pacs[MSC Classification]{47H05, 47J20, 47J25, 90C25}

\maketitle

\section{Introduction}\label{sec1}
Consider a real Hilbert space $H$ with inner product $\langle \cdot, \cdot \rangle$ and norm $\|\cdot\|$. The variational inequality problem associated with a continuous mapping $F : H \to H$, abbreviated as $VI(C, F)$, is defined as follows: find $u^\ell\in C$ such that
\begin{equation}\label{1.1}
  \langle F(u^\ell), z-u^\ell\rangle\geq 0,\quad \forall z\in C,
\end{equation}
where $C$ represents a nonempty closed convex subset of $H$. Let $S$ denote the solution set of $VI(C, F)$, and let $S_D$ denote the solution set of the following problem, which is specifically described as finding $u^\ell \in C$ such that
\begin{equation}\label{1.2}
  \langle F(z), z - u^\ell \rangle \geq 0, \quad \forall z \in C.
\end{equation}
Clearly, $S_D$ is a closed convex set, though it could be empty. In the case
where $F$ is a continuous mapping and $C$ is a convex set, it follows that $S_D \subseteq S$. Moreover, $S_D = S$ holds if $F$ is both pseudomonotone and continuous mapping \cite{co}. Nevertheless, it is essential to acknowledge that the reverse $S\subset S_D $ does not hold when $F$ is a quasimonotone and continuous mapping \cite{ye}.

To solve the variational inequality problem, various iterative algorithms have been proposed, such as \cite{ye,kor,sol,cen,tseng2000,mali,dong,yang2018,yang3,shehu,yao,Thong,Tan2023,thong2024,shehu2019}. One of these algorithms, known as the extragradient algorithm, was proposed by Korpelevich \cite{kor} (also by Antipin \cite{an}) for solving the variational inequality problem in finite dimensional spaces where the operator is both monotone and Lipschitz continuous. As we know, this method involves two projections onto the feasible set $C$. However, if the structure of $C$ is highly complex, the projection may be difficult to compute or have no explicit expression. \\
To overcome this limitation, one notable approach is the subgradient extragradient algorithm developed by Censor et al. \cite{cen}. This method substitutes the extragradient algorithm of using two projections onto $C$ with a new combination: one projection onto $C$ and another onto the half-space $T_n$. The details of this algorithm are as follows:
\begin{equation}\label{1.3}
\begin{cases}
w_n=P_C(z_n-\lambda F(z_n)),\\
z_{n+1}=P_{T_n}(z_n-\lambda F(w_n)),
\end{cases}
\end{equation}
where $\lambda\in (0, \frac{1}{L})$, $L$ is the Lipschitz constant of the monotone operator $F$ and $T_n=\{v\in H: \langle z_n-\lambda F(z_n)-w_n, v-w_n \rangle\leq0\}$.
Another method is the Tseng's extragradient algorithm proposed by Tseng in \cite{tseng2000}. This method replaces the second step in the extragradient algorithm with an explicit calculation step, as follows:
\begin{equation}\label{1.4}
\begin{cases}
w_n=P_C(z_n-\lambda F(z_n)),\\
z_{n+1}=w_n-\lambda (F(w_n)-F(z_n)),
\end{cases}
\end{equation}
where $\lambda\in (0, \frac{1}{L})$. \\
The operators involved in the above algorithms (\ref{1.3}) and (\ref{1.4}) are monotone, and this requirement for the operators is too strict. Consequently, these methods were later used to solve $VI(C, F)$ with the cost operators being pseudomonotone and quasimonotone (see e.g., \cite{yao,Thong,Tan2023,yang3,shehu2019,thong2024}). In addition, the step sizes involved in these methods described above are related to the Lipschitz constant of the operator $F$, which is typically unknown or difficult to estimate. For this reason, Liu and Yang \cite{yang3} modified algorithm (\ref{1.4}) and proposed the following non-monotonic step size:
\begin{equation}\label{liuyang}
\lambda_{n+1}=
\begin{cases}
\min\{\frac{\mu\|z_n-w_n\|}{\|F(z_n)-F(w_n)\|},\ \lambda_n+\xi_n\},&\text{if}\ F(z_n)-F(w_n)\neq 0,\\
\lambda_n+\xi_n,&\text{otherwise},
\end{cases}
\end{equation}
where $\mu\in(0,1)$ and $\{\xi_n\}\subset [0,\infty)$ such that $\sum\limits_{n=1}^{\infty}\xi_n<\infty$.\\
It is worth noting that when dealing with quasimonotone variational inequality problems, the assumption that for all $p\in C$, $F(p)\neq 0$ is commonly made, as can be seen in \cite{uzor,ofem,mewomo2024,thong2024} and references therein. This assumption implies that the solution to $VI(C, F)$ is restricted to the set $S_D$. However, it is known that when the operator $F$ is quasimonotone, we have $S_D \subseteq S$, but the converse does not hold. That is to say, the solution of $VI(C, F)$ is not necessarily in $S_D$. Therefore, such an assumption is too restrictive. Then, when considering the general situation where the solution $u$ of the variational inequality problem satisfies $u\in S$, how to design algorithms and conditions becomes a difficult problem. As a result, the modified Tseng's extragradient algorithm was proposed by Liu and Yang \cite{yang3}, which incorporates the self-adaptive step size criterion (\ref{liuyang}) for solving the quasimonotone variational inequality problem. They proved that the proposed algorithm converges weakly to a point in $S$ under the following conditions:\\
(a)\ $S_D\neq \emptyset$.\\
(b)\ $F$ is quasimonotone and $L$-Lipschitz continuous mapping.\\
(c)\ $F$ is sequentially weakly continuous.\\
(d)\ $A=\{p\in C: F(p)=0\}\backslash S_D$ is a finite set.

The above method presents two noteworthy problems as follows:\\
(P1)\ The requirement of condition (d) is too strict and does not apply to the case where the set $A$ contains infinitely many points.\\
(P2)\ Under the given assumptions, the authors only obtained weak convergence results. However, it is widely acknowledged that strong convergence results are generally more preferable than weak convergence.

Our goal in this paper is to solve the two aforementioned problems. Specifically, we provide a more relaxed condition compared to \cite{yang3}, which also allows the solution of the quasimonotone variational inequality problem to belong to the solution set $S$, rather than restricting it to the solution set $S_D$. Additionally, under a certain condition, we achieve strong convergence results for the algorithm. And we delve into the analysis of the convergence rate, including sublinear convergence and $Q$-linear convergence.

The organization of the paper is as follows: As for the definitions and lemmas to be applied in the subsequent proofs, we present them in Section \ref{sec2}. In Section \ref{sec3}, we introduce the algorithm and assumptions for solving quasimonotone variational inequality problems. In Section \ref{sec4}, we conduct a comprehensive analysis of the strong convergence of the algorithm under the given assumptions. Then, sublinear convergence and $Q$-linear convergence are obtained in Section \ref{sec5}. Finally, Section \ref{sec55} demonstrates the effectiveness of the algorithm and the feasibility of the proposed condition through numerical experiments.

\section{Preliminaries}\label{sec2}
In this section, we provide an introduction to some concepts and lemmas that will prove to be instrumental in the ensuing analysis. We use $u_n \rightharpoonup u$ to denote that the sequence $\{u_n\}$ converges weakly to $u$, and $u_n \to u$ to denote that $\{u_n\}$ converges strongly to $u$.
\begin{definition}
A mapping $F: H\to H$ is referred to as
\\ \emph{(i)} $\delta$-strongly monotone if there exists a positive constant $\delta$ such that
\begin{equation*}
  \langle F(u)-F(z), u-z \rangle\geq\delta\|u-z\|^2,\quad \forall u,\ z\in H.
\end{equation*}
\\ \emph{(ii)} monotone if
\begin{equation*}
  \langle F(u)-F(z), u-z \rangle\geq0,\quad \forall u,\ z\in H.
\end{equation*}
\\ \emph{(iii)} $\eta$-strongly pseudomonotone if there exists a positive constant $\eta$ such that
\begin{equation*}
  \langle F(u), z-u \rangle\geq 0\Longrightarrow \langle F(z), z-u \rangle\geq\eta\|u-z\|^2,\quad \forall u,\ z\in H.
\end{equation*}
\\ \emph{(iv)} pseudomonotone if
\begin{equation*}
  \langle F(u), z-u \rangle\geq0\Longrightarrow \langle F(z), z-u \rangle\geq0,\quad \forall u,\ z\in H.
\end{equation*}
\\ \emph{(v)} quasimonotone if
\begin{equation*}
  \langle F(u), z-u \rangle>0\Longrightarrow \langle F(z), z-u \rangle\geq0,\quad \forall u,\ z\in H.
\end{equation*}
\\ \emph{(vi)} $L$-Lipschitz continuous, if there exists a constant $L>0$ such that
\begin{equation*}
  \|F(u)-F(z)\|\leq L\|u-z\|,\quad \forall u,\ z\in H.
\end{equation*}
\end{definition}
Obviously, (i)$\Rightarrow$(ii)$\Rightarrow$(iv)$\Rightarrow$(v) and (iii)$\Rightarrow$(iv)$\Rightarrow$(v). However, the converse do not necessarily hold true.

For a given point $w$, we define its metric projection onto the set $C$ as $P_C(w)$. This projection is distinguished by the characteristic that it minimizes the distance between $w$ and any point in $C$, and is formally defined as:
\begin{equation*}
  P_C(w):=\mathop{\arg\min}\{\|w-v\|:v\in C\}.
\end{equation*}
\begin{lemma}\label{lm:2.1}\cite{bau}
The following results hold for any $w,\ v\in H$ and any $u\in C$:
\\ \emph{(i)} $\langle w-P_C(w),\ u-P_C(w)\rangle\leq0$.
\\ \emph{(ii)} $\|w-P_C(w)\|^2\leq \langle w-P_C(w),\ w-u \rangle$.
\\ \emph{(iii)} $\|P_C(w)-P_C(v)\|\leq\|w-v\|$.
\end{lemma}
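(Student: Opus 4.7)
The plan is to prove the three parts in sequence, each grounded in the minimization property defining $P_C(w)$. For (i), the variational characterization, I would exploit convexity of $C$: for any $u\in C$ and any $t\in[0,1]$, the point $v_t:=(1-t)P_C(w)+tu$ still lies in $C$, so the scalar function $\varphi(t):=\|w-v_t\|^2$ attains its minimum on $[0,1]$ at $t=0$. A direct computation gives $\varphi'(0^+)=2\langle w-P_C(w),\, P_C(w)-u\rangle$, and this must be nonnegative, which is exactly the required inequality after sign change.

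Part (ii) is an algebraic consequence of (i). I would write $w-u=(w-P_C(w))+(P_C(w)-u)$ and take the inner product with $w-P_C(w)$, obtaining $\langle w-P_C(w),\,w-u\rangle=\|w-P_C(w)\|^2+\langle w-P_C(w),\,P_C(w)-u\rangle$. The last term is nonnegative by (i), so it can be dropped to yield the stated bound.

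For part (iii), nonexpansiveness, I would invoke (i) symmetrically at both points: take $u=P_C(v)$ in the characterization at $w$ and $u=P_C(w)$ in the characterization at $v$, giving two inequalities $\langle w-P_C(w),\,P_C(v)-P_C(w)\rangle\le 0$ and $\langle v-P_C(v),\,P_C(w)-P_C(v)\rangle\le 0$. Adding these and regrouping in the form $\langle (w-v)-(P_C(w)-P_C(v)),\,P_C(v)-P_C(w)\rangle\le 0$ yields $\|P_C(w)-P_C(v)\|^2\le\langle w-v,\,P_C(w)-P_C(v)\rangle$, after which Cauchy--Schwarz and division by $\|P_C(w)-P_C(v)\|$ (in the nontrivial case) finishes the bound.

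Since this lemma is classical and already quoted from Bauschke--Combettes, I do not anticipate a genuine obstacle; the reasoning is essentially bookkeeping around the first-order optimality condition. The only nontrivial preliminary used silently is the existence and uniqueness of $P_C(w)$ itself, which rests on closedness and convexity of $C$ together with strict convexity of the Hilbert norm, and would be assumed established before this point in the paper.
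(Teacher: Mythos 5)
Your proof is correct: the paper does not prove this lemma at all (it is quoted from Bauschke--Combettes), and your argument --- the first-order optimality condition over the convex combination for (i), the orthogonal-style decomposition for (ii), and the symmetric application of (i) plus Cauchy--Schwarz for (iii) --- is precisely the standard proof found in the cited reference. Nothing is missing beyond the existence and uniqueness of the projection, which you correctly flag as a prerequisite.
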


\begin{lemma}\label{lm:2.3}\cite{opial1967weak}(Opial)
If $v_n\rightharpoonup v$ for any sequence $\{v_n\}$ in $H$, then\\
\begin{equation*}
  \liminf\limits_{n\to\infty}\|v_n-v\|<\liminf\limits_{n\to\infty}\|v_n-p\|,\quad\forall v\neq p.
\end{equation*}
\end{lemma}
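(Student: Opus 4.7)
The plan is to reduce the claim to a statement about squared norms via the standard Hilbert-space identity, so that the cross term can be handled using the definition of weak convergence. Concretely, for each $n$ I would write
\begin{equation*}
\|v_n-p\|^2 \;=\; \|(v_n-v)+(v-p)\|^2 \;=\; \|v_n-v\|^2 \;+\; 2\langle v_n-v,\, v-p\rangle \;+\; \|v-p\|^2.
\end{equation*}
The vector $v-p$ is fixed (independent of $n$), and the hypothesis $v_n\rightharpoonup v$ means precisely that $\langle v_n-v,\, w\rangle\to 0$ for every $w\in H$. Choosing $w=v-p$ makes the middle term vanish in the limit.

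Next I would take $\liminf_{n\to\infty}$ on both sides. Because the $\liminf$ is additive when one of the sequences converges, the identity becomes
\begin{equation*}
\liminf_{n\to\infty}\|v_n-p\|^2 \;=\; \liminf_{n\to\infty}\|v_n-v\|^2 \;+\; \|v-p\|^2.
\end{equation*}
Since $v\neq p$, the term $\|v-p\|^2$ is strictly positive, which yields strict inequality between the two $\liminf$s of the squared norms. Because $t\mapsto\sqrt{t}$ is strictly increasing on $[0,\infty)$ and commutes with $\liminf$ on nonnegative sequences, the same strict inequality transfers to $\|v_n-p\|$ and $\|v_n-v\|$, giving the desired conclusion.

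One small technical point I would need to justify is that the quantities $\liminf_n\|v_n-v\|^2$ and $\liminf_n\|v_n-p\|^2$ are finite, so that the additivity step above is not of the form $\infty=\infty+c$. This follows from the fact that weakly convergent sequences in $H$ are norm-bounded (a consequence of the uniform boundedness principle), so $\{v_n-v\}$ and $\{v_n-p\}$ are bounded and the $\liminf$s are finite real numbers. This boundedness remark is really the only non-routine ingredient; the rest is the parallelogram-style expansion combined with the definition of weak convergence, so I do not foresee a serious obstacle beyond writing these steps cleanly.
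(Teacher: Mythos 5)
Your proof is correct: the expansion $\|v_n-p\|^2=\|v_n-v\|^2+2\langle v_n-v, v-p\rangle+\|v-p\|^2$, the vanishing of the cross term by weak convergence, and the boundedness of weakly convergent sequences are exactly the ingredients of the classical argument for Opial's lemma in a Hilbert space, which the paper does not reprove but simply cites from Opial's original work. Your handling of the passage from squared norms back to norms (finiteness of the liminfs plus strict monotonicity of the square root) is the only point needing care, and you address it adequately.
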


\section{Proposed Method}\label{sec3}
In this section, we first present the assumptions throughout the paper as follows:
\begin{description}
 \item[(A1)]$S_D\neq \emptyset$.
 \item[(A2)]$F: H\to H$ is quasimonotone and $L$-Lipschitz continuous mapping with $L>0$.
 \item[(A3)]The mapping $F$ satisfies whenever $\{u_n\}\subset H$ and $u_n\rightharpoonup u$, there is $\|Fu\|\leq \liminf\limits_{n\to\infty}\|Fu_n\|$.
 \item[(A4)]The set $A:=\{p\in C: F(p)=0\}\backslash S_D$. For all $u\in S_D$, define $A_u=\{u\}\cup A$ that satisfies:
   $\exists\ \delta_u>0$, $\forall\ y\in A_u$, $\exists\  r_i\in H$ and $\|r_i\|=1$, $i=1,2, \cdots, m$ ($m\in \mathbb{N}$), let
   \begin{equation*}
     \Omega(y, \delta_u)=\{x\in H:|\langle r_i, x-y \rangle|<\delta_u,\ i=1,\cdots,m\},
   \end{equation*}
   such that for any $y_1$, $y_2\in A_u$ and $y_1\neq y_2$, $\Omega(y_1, \delta_u)\cap \Omega(y_2, \delta_u)=\emptyset$.
\end{description}
\begin{remark}
If the condition in \cite{yang3} holds, that is, assuming that $A=\{p\in C: F(p)=0\}\backslash S_D$ is a finite set, then our proposed condition (A4) naturally holds. Here, let $A=\{y^1, y^2,\ldots, y^t\}$ and $u=y^0$, then $A_u = \{y^0, y^1, y^2,\ldots, y^t\}$. We define $r_{ij}=\frac{y^j-y^i}{\|y^j-y^i\|}$ and $\delta_{ij}=\frac{1}{4}|\langle r_{ij}, y^j-y^i \rangle|=\frac{1}{4}\|y^j-y^i\|$, where $i,j\in\{0,1,2,\ldots,t\}$ and $j\neq i$. Then take
\begin{equation*}
  \delta_u= \min_{\substack{j \neq i}} \delta_{ij}=\frac{1}{4}\min_{\substack{j \neq i}}\|y^j-y^i\|>0.
\end{equation*}
Next, we verify that $\Omega(y^j, \delta_u)\cap \Omega(y^i, \delta_u)=\emptyset$, $\forall y^j,\ y^i\in A_u$ and $y^j\neq y^i$. Assume that the conclusion does not hold, i.e., there exists $x\in \Omega(y^j, \delta_u)\cap \Omega(y^i, \delta_u) $. Thus, we can get
\begin{equation*}
  |\langle r_{ij}, x-y^j \rangle|<\delta_u\ \text{and}\ |\langle r_{ij}, x-y^i \rangle|<\delta_u.
\end{equation*}
Therefore,
\begin{align*}
  4\delta_u\leq4\delta_{ij}&=\|y^j-y^i\|\\&=|\langle r_{ij}, y^j-y^i \rangle|\\
  &\leq|\langle r_{ij}, x-y^i \rangle|+|\langle r_{ij}, x-y^j \rangle|<2\delta_u.
\end{align*}
That's a contradiction. Hence, $\Omega(y^j, \delta_u)\cap \Omega(y^i, \delta_u)=\emptyset$. This implies that assumption (A4) holds.\\
Conversely, when (A4) holds, the condition in \cite{yang3} does not necessarily hold. This is due to the fact that our proposed condition (A4) can be adapted to the case where the set $A$ contains an infinite number of points.
\end{remark}

To demonstrate the applicability of condition (A4) in the case where the set $A$ contains infinitely many points, we provide the following example.
\begin{example}
Let $C=[0,+\infty)$ and
\begin{equation*}
  F(z)=1+\sin(z).
\end{equation*}
It is easy to know that $F$ is a quasimonotone and Lipschitz continuous mapping on $C$. Furthermore, it follows that
\begin{equation*}
  S=\{0\}\cup\{2k\pi+\frac{3\pi}{2},k=0,1,2,\cdots\}\ \text{and}\ S_D=\{0\}.
\end{equation*}
Therefore, $A=\{p\in C: F(p)=0\}\backslash S_D=\{2k\pi+\frac{3\pi}{2},k=0,1,2,\cdots\}$ is an infinite set and $A_u=\{u\}\cup A=\{0\}\cup \{2k\pi+\frac{3\pi}{2},k=0,1,2,\cdots\}$. Take $\delta_u=\frac{\pi}{2}$ and $r_i=1$, $i=1,2, \cdots, m$. In this case, for all $y\in A_u$,
\begin{equation*}
  \Omega(y, \delta_u)=\{x\in H:|x-y|<\frac{\pi}{2}\}.
\end{equation*}
Then, for all $y_1,\ y_2\in A_u$ and $y_1\neq y_2$, $\Omega(y_1, \delta_u)\cap \Omega(y_2, \delta_u)=\emptyset$.
\end{example}

The following is the self-adaptive extragradient algorithm proposed by Liu and Yang \cite{yang3} for solving $VI(C,F)$ (\ref{1.1}).
\renewcommand{\thealgorithm}{3.1}
\begin{algorithm}[H]
\caption{Self-adaptive Tseng's Extragradient Algorithm}
\label{AL1}
\begin{algorithmic}
\STATE{\emph{Step 0:} Let $\mu\in(0,1)$ and $\{\xi_k\}\subset [0,\infty)$ such that $\sum\limits_{k=1}^{\infty}\xi_k<\infty$. Choose arbitrarily initial point $u_1\in H$ and $\lambda_1>0$.} 
\STATE{\emph{Step 1:} Compute
\begin{equation*}
z_n=P_C(u_n-\lambda_nF(u_n)).
\end{equation*}
If $u_n=z_n$ or $F(z_n)=0$, then terminate the process ($z_n$ is a solution). Otherwise,
}
\STATE{\emph{Step 2:} Compute
\begin{equation*}
  u_{n+1}=z_n+\lambda_n(F(u_n)-F(z_n)).
\end{equation*}
\STATE{\emph{Step 3:} Update
\begin{equation*}
\lambda_{n+1}=
\begin{cases}
\min\{\frac{\mu\|u_n-z_n\|}{\|F(u_n)-F(z_n)\|},\ \lambda_n+\xi_n\},\ \ \ \text{if}\ F(u_n)-F(z_n)\neq 0,\\
\lambda_n+\xi_n,\qquad\qquad\qquad\qquad\qquad\ \text{otherwise}.
\end{cases}
\end{equation*}
}
}
\STATE{\emph{Step 4:}\ Set $n:= n + 1$ to update $n$, and then return to
\emph{Step 1}.}
\end{algorithmic}
\end{algorithm}

\section{Strong Convergence}\label{sec4}
In this section, we analyze the strong convergence of the sequence $\{u_n\}$ generated by Algorithm \ref{AL1}, assuming that assumptions \emph{(A1)}-\emph{(A4)} and the following condition are satisfied:
\begin{itemize}
  \item\emph{(A5)} The sequence $\{z_n\}\subset C$ and $z_n\rightharpoonup z$, for $\varepsilon\geq0$, there is $\liminf\limits_{n\to\infty}\frac{|\langle F(z_n), z_n-z \rangle|}{\|z_n-z\|^{2+\varepsilon}}>0$.
\end{itemize}

\begin{lemma}\label{lll}
The sequence $\{\lambda_n\}$ is generated by Algorithm \ref{AL1}. Then it follows that $\{\lambda_n\}$ has upper bound $\lambda_1+\Xi$ and lower bound $\min\{\lambda_1, \frac{\mu}{L}\}$. And $\lim\limits_{n\to\infty}\lambda_n=\lambda$ with $\lambda\in[\min\{\lambda_1, \frac{\mu}{L}\}, \lambda_1+\Xi]$, where $\Xi=\Sigma_{n=1}^\infty \xi_n$.
\end{lemma}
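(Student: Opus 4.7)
The plan is to establish the two bounds by induction and then deduce convergence from a monotone auxiliary sequence obtained by adding the tail of $\sum \xi_k$ to $\lambda_n$.

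First, I would prove the upper bound $\lambda_1+\Xi$. In both branches of the update rule, $\lambda_{n+1}\le \lambda_n+\xi_n$. Iterating this inequality from $k=1$ to $n-1$ yields $\lambda_n\le \lambda_1+\sum_{k=1}^{n-1}\xi_k\le \lambda_1+\Xi$.

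Next, for the lower bound $\min\{\lambda_1,\mu/L\}$, I argue by induction on $n$. If $F(u_n)-F(z_n)\neq 0$, the $L$-Lipschitz continuity of $F$ (assumption (A2)) gives $\|F(u_n)-F(z_n)\|\le L\|u_n-z_n\|$, so
\begin{equation*}
\frac{\mu\|u_n-z_n\|}{\|F(u_n)-F(z_n)\|}\ge \frac{\mu}{L}.
\end{equation*}
Combined with $\lambda_n+\xi_n\ge \lambda_n$, this gives $\lambda_{n+1}\ge \min\{\mu/L,\lambda_n\}$. In the otherwise branch $\lambda_{n+1}=\lambda_n+\xi_n\ge\lambda_n$. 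Hence in either case $\lambda_{n+1}\ge \min\{\mu/L,\lambda_n\}$, and the induction hypothesis $\lambda_n\ge \min\{\lambda_1,\mu/L\}$ delivers $\lambda_{n+1}\ge \min\{\lambda_1,\mu/L\}$.

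For convergence, the sequence $\{\lambda_n\}$ is bounded but not monotone, so I introduce the auxiliary sequence $s_n:=\lambda_n+\sum_{k=n}^{\infty}\xi_k$, which is well defined because $\sum \xi_k<\infty$. Using $\lambda_{n+1}\le\lambda_n+\xi_n$ gives
\begin{equation*}
s_{n+1}=\lambda_{n+1}+\sum_{k=n+1}^{\infty}\xi_k\le \lambda_n+\xi_n+\sum_{k=n+1}^{\infty}\xi_k=s_n,
\end{equation*}
so $\{s_n\}$ is non-increasing. It is also bounded below by $\min\{\lambda_1,\mu/L\}$, hence convergent. Since $\sum_{k=n}^{\infty}\xi_k\to 0$ as $n\to\infty$, we obtain $\lambda_n=s_n-\sum_{k=n}^{\infty}\xi_k\to\lambda$ for some $\lambda$, and the two bounds on $\lambda_n$ pass to the limit to give $\lambda\in[\min\{\lambda_1,\mu/L\},\lambda_1+\Xi]$.

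The only subtle step is the convergence argument; the bounds themselves are straightforward inductions. The monotone auxiliary sequence trick is the cleanest way to sidestep the non-monotonicity introduced by the $+\xi_n$ correction.
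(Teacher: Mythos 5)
Your proof is correct and complete: the upper bound by iterating $\lambda_{n+1}\le\lambda_n+\xi_n$, the lower bound by induction using the Lipschitz estimate $\mu\|u_n-z_n\|/\|F(u_n)-F(z_n)\|\ge\mu/L$ (valid since $F(u_n)\neq F(z_n)$ forces $u_n\neq z_n$), and the convergence via the non-increasing auxiliary sequence $s_n=\lambda_n+\sum_{k\ge n}\xi_k$ all hold. The paper itself does not spell out an argument but defers to Lemma 3.1 of the cited work of Liu and Yang, where the same two bounds are obtained in the same way, but convergence is deduced by a slightly different device: one splits the increments $\lambda_{n+1}-\lambda_n$ into positive and negative parts, notes that the positive parts are dominated by $\xi_n$ and hence summable, and argues that if the negative parts were non-summable then $\lambda_n\to-\infty$, contradicting the lower bound, so both series converge and $\lambda_n$ has a limit. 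Your monotone-auxiliary-sequence trick reaches the same conclusion a bit more directly (no contradiction argument, no handling of two series), while the increment-decomposition argument has the minor advantage of immediately quantifying the total variation of $\{\lambda_n\}$; either route is perfectly adequate here, and your observation that the limit lies in $[\min\{\lambda_1,\mu/L\},\lambda_1+\Xi]$ by passing the bounds to the limit matches the stated conclusion.
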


\begin{proof}
A detailed proof can be obtained from Lemma 3.1 in \cite{yang3}.
\end{proof}

\begin{lemma}\label{22}
Assume that conditions (A1) and (A2) hold. Then, the sequence $\{u_n\}$ generated by Algorithm \ref{AL1} is bounded, $\lim\limits_{n\to\infty}\|u_n-u\|$ exists for any $u\in S_D$, $\lim\limits_{n\to\infty}\|u_n-z_n\|=0$ and $\lim\limits_{n\to\infty}\|u_{n+1}-u_n\|=0$.
\end{lemma}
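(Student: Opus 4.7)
The plan is to derive the standard Tseng-style Fej\'er-type inequality
\begin{equation*}
\|u_{n+1}-u\|^2 \leq \|u_n-u\|^2 - \Bigl(1-\mu^2\frac{\lambda_n^2}{\lambda_{n+1}^2}\Bigr)\|u_n-z_n\|^2
\end{equation*}
for any $u\in S_D$, and then read off all four assertions from it. To obtain this, I would first expand $\|u_{n+1}-u\|^2$ via the update $u_{n+1}=z_n+\lambda_n(F(u_n)-F(z_n))$, then rewrite $\|z_n-u\|^2$ by the three-point identity and eliminate the cross-term $\langle z_n-u_n,z_n-u\rangle$ by applying Lemma~\ref{lm:2.1}(i) to $z_n=P_C(u_n-\lambda_n F(u_n))$ with test point $u\in C$. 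After cancellation only a $-2\lambda_n\langle F(z_n),z_n-u\rangle$ term and $\lambda_n^2\|F(u_n)-F(z_n)\|^2$ survive besides the telescoped part. The hypothesis $u\in S_D$ enters exactly here: since $z_n\in C$, we have $\langle F(z_n),z_n-u\rangle\ge 0$, so this term can be dropped. The quadratic residual is then absorbed using the step-size rule, which forces $\lambda_{n+1}\|F(u_n)-F(z_n)\|\le\mu\|u_n-z_n\|$ (trivial when $F(u_n)=F(z_n)$), producing the displayed inequality.

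Next I would invoke Lemma~\ref{lll}: because $\lambda_n\to\lambda>0$, we have $\lambda_n/\lambda_{n+1}\to 1$, so $\mu^2\lambda_n^2/\lambda_{n+1}^2\to\mu^2<1$. Hence there exist $c\in(0,1-\mu^2)$ and an index $N$ such that the bracket is at least $c$ for all $n\ge N$. From the tail descent relation $\|u_{n+1}-u\|^2\le\|u_n-u\|^2-c\|u_n-z_n\|^2$ the first three claims follow at once: the sequence $\{\|u_n-u\|\}$ is eventually non-increasing and bounded below by $0$, hence convergent, giving the existence of $\lim_n\|u_n-u\|$ for every $u\in S_D$ and the boundedness of $\{u_n\}$; and telescoping yields $c\sum_{n\ge N}\|u_n-z_n\|^2<\infty$, so in particular $\|u_n-z_n\|\to 0$. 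For the last claim I would simply bound
\begin{equation*}
\|u_{n+1}-u_n\|\leq\|z_n-u_n\|+\lambda_n\|F(u_n)-F(z_n)\|\leq(1+L\lambda_n)\|u_n-z_n\|,
\end{equation*}
which tends to $0$ by the $L$-Lipschitz continuity of $F$ together with the boundedness of $\{\lambda_n\}$ given in Lemma~\ref{lll}.

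The only real obstacle is the non-monotonic step size: the factor $1-\mu^2\lambda_n^2/\lambda_{n+1}^2$ could in principle be negative for small $n$, because $\lambda_{n+1}$ may briefly shrink relative to $\lambda_n$. However, Lemma~\ref{lll} forces $\lambda_n/\lambda_{n+1}\to 1$, so this nuisance vanishes past some fixed index $N$, and the finitely many early indices contribute nothing to any of the asymptotic conclusions. Everything else in the argument is routine algebraic bookkeeping.
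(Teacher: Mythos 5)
Your proposal is correct and follows essentially the same route as the proof the paper defers to (Lemma 3.2 of Liu and Yang \cite{yang3}): the Fej\'er-type inequality $\|u_{n+1}-u\|^2\le\|u_n-u\|^2-\bigl(1-\mu^2\lambda_n^2/\lambda_{n+1}^2\bigr)\|u_n-z_n\|^2$ for $u\in S_D$, obtained via the projection inequality and the step-size rule, is exactly the estimate the paper itself reproduces as (\ref{3.31}), and your handling of the possibly negative bracket for small $n$ via $\lambda_n\to\lambda>0$ is the standard fix. No gaps.
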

\begin{proof}
Refer to Lemma 3.2 in \cite{yang3} for a detailed proof.
\end{proof}

\begin{lemma}\label{333}
Assume that conditions (A1)-(A3) hold. Let $\{u_n\}$ be a sequence generated by Algorithm \ref{AL1}. If there exists a subsequence $\{u_{n_k}\}$ of $\{u_n\}$ that converges weakly to $z\in H$, then $z\in S_D$ or $F(z) = 0$.
\end{lemma}
\begin{proof}
For the proof, see Lemma 3.3 in \cite{yang3}.
\end{proof}

\begin{lemma}\label{444}
Assume that conditions (A1)-(A4) hold. The sequence $\{u_n\}$ generated by Algorithm \ref{AL1} has no more than one weak cluster point in $S_D$. And there exists $N_1 \geq 0$ such that for all $n \geq N_1$, $u_n\in \Omega$, where $\Omega = \bigcup_{y\in A_u}\Omega(y, \frac{\delta_u}{2})$.
\end{lemma}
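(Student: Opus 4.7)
The plan is to handle the two claims separately and then connect them. For the first claim, I would argue by contradiction using Opial's lemma (Lemma \ref{lm:2.3}) together with the existence of $\lim_n\|u_n-u\|$ for every $u\in S_D$ guaranteed by Lemma \ref{22}. Suppose $u^\ast$ and $\tilde u$ were two distinct weak cluster points in $S_D$, with subsequences $u_{n_k}\rightharpoonup u^\ast$ and $u_{n_j}\rightharpoonup \tilde u$. Opial applied to the first subsequence gives
\[
\lim_{n\to\infty}\|u_n-u^\ast\|=\liminf_{k}\|u_{n_k}-u^\ast\|<\liminf_{k}\|u_{n_k}-\tilde u\|=\lim_{n\to\infty}\|u_n-\tilde u\|,
\]
and the symmetric application to $\{u_{n_j}\}$ yields the reverse strict inequality, a contradiction.

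For the second claim, I would first show that every weak cluster point of $\{u_n\}$ lies in $A_u$, where $u$ is taken to be the (unique, by the first part) weak cluster point in $S_D$ if one exists, and an arbitrary element of $S_D$ otherwise. Indeed, given any weak subsequential limit $u_{n_k}\rightharpoonup z$, the relation $\|u_n-z_n\|\to 0$ from Lemma \ref{22} forces $z_{n_k}\rightharpoonup z$, so $z\in C$ by the weak closedness of $C$. Lemma \ref{333} then gives $z\in S_D$ or $F(z)=0$; in the former case $z=u$ by the uniqueness just proved, while in the latter case $z\in A\cup S_D$, and either way $z\in A_u$.

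Then I would argue by contradiction for the eventual inclusion. Assume there is a subsequence $\{u_{n_k}\}$ with $u_{n_k}\notin\Omega$ for every $k$. Since $\{u_n\}$ is bounded by Lemma \ref{22}, a further subsequence $u_{n_{k_j}}$ converges weakly to some $z\in A_u$ by the paragraph above. Applying (A4) with this specific $z$, there exist unit vectors $r_1,\dots,r_m$ such that $\Omega(z,\delta_u/2)$ is described by the strict inequalities $|\langle r_i,x-z\rangle|<\delta_u/2$. Weak convergence of $u_{n_{k_j}}$ to $z$ makes each scalar $\langle r_i,u_{n_{k_j}}-z\rangle$ tend to $0$, so for $j$ large enough all $m$ inequalities hold simultaneously, placing $u_{n_{k_j}}\in\Omega(z,\delta_u/2)\subset\Omega$, contradicting $u_{n_{k_j}}\notin\Omega$.

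The main obstacle I anticipate is the case distinction embedded in identifying the weak cluster points with elements of $A_u$: one must correctly reconcile the two alternatives produced by Lemma \ref{333} with the definition $A_u=\{u\}\cup A$, using Part 1 to ensure the $S_D$-alternative collapses to the single point $u$. The rest is a soft compactness plus weak-continuity-of-linear-functionals argument, which is standard once the neighborhood structure of (A4) is in hand.
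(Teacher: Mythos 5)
Your proposal is correct and follows essentially the same route as the paper: Opial's lemma plus the existence of $\lim_n\|u_n-u\|$ for the uniqueness claim, then identification of all weak cluster points with elements of $A_u$ via Lemma \ref{333}, and a contradiction argument on a subsequence lying outside $\Omega$. The only difference is cosmetic but welcome: where the paper concludes $z^*\notin\Omega$ by appealing to openness of $\Omega$ (which really needs weak openness, valid here because each $\Omega(y,\delta_u/2)$ is cut out by finitely many continuous linear functionals), you justify the same step directly through the weak convergence $\langle r_i,u_{n_{k_j}}-z\rangle\to 0$, which is a slightly more explicit version of the same argument.
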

\begin{proof}
First, we show that there is no more than one weak cluster point in $S_D$ for the sequence $\{u_n\}$.\\
Suppose the sequence $\{u_n\}$ has no less than two weak cluster points in $S_D$, denoted as $\tilde{u}$ and $\check{u}$, where $\tilde{u}\neq\check{u}$. Let $\{u_{n_j}\}$ be a subsequence of $\{u_n\}$ that converges weakly to $\tilde{u}$ as $j \to \infty$. According to Lemma \ref{22}, $\lim\limits_{n\to\infty}\|u_n-u\|$ exists for any \( u \in S_D \). Furthermore, based on Lemma \ref{lm:2.3}, we obtain
\begin{align*}
  \lim\limits_{n\to\infty}\|u_n-\tilde{u}\|
  =&\lim\limits_{j\to\infty}\|u_{n_j}-\tilde{u}\|=\liminf\limits_{j\to\infty}\|u_{n_j}-\tilde{u}\|\\
  <&\liminf\limits_{j\to\infty}\|u_{n_j}-\check{u}\|=\lim\limits_{n\to\infty}\|u_n-\check{u}\|\\
  =&\lim\limits_{l\to\infty}\|u_{n_l}-\check{u}\|=\liminf\limits_{l\to\infty}\|u_{n_l}-\check{u}\|\\
  <&\liminf\limits_{l\to\infty}\|u_{n_l}-\tilde{u}\|=\lim\limits_{l\to\infty}\|u_{n_l}-\tilde{u}\|
  =\lim\limits_{n\to\infty}\|u_{n}-\tilde{u}\|.
\end{align*}
This leads to a contradiction. Hence, the conclusion holds. Moreover, according to Lemma \ref{333} and the definition of $A_u$, it is clear that $A_u$ contains all the weak cluster points of the sequence $\{u_n\}$.\\
Next, we prove that $u_n\in\Omega$. Suppose there exists a subsequence $\{u_{n_k}\}$ of $\{u_n\}$ such that $u_{n_k}\notin \Omega$ for any $k$. Given that the sequence $\{u_{n_k}\}$ is bounded, there exists a subsequence of $\{u_{n_k}\}$ which converges weakly to $z^*$. Without loss of generality, we still denote this subsequence as $\{u_{n_k}\}$, which implies that $u_{n_k}\rightharpoonup z^*$. Since $\Omega$ is an open set and by assumption $u_{n_k} \notin \Omega$, it follows that $z^* \notin \Omega$, that is $z^*\notin A_u$. This is impossible. Therefore, we conclude that $u_n\in \Omega$ when $n$ is sufficiently large, i.e., there exists $N_1 \geq 0$ such that for all $n \geq N_1$, $u_n\in \Omega$.
\end{proof}

\begin{lemma}\label{555}
Assume that conditions (A1)-(A4) hold. Then, the sequence $\{u_n\}$ that is generated by Algorithm \ref{AL1} converges weakly to a point of $S$.
\end{lemma}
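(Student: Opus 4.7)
The plan is to combine Lemmas~\ref{22}--\ref{444} with a discreteness argument powered by (A4) to pin down a single weak cluster point for $\{u_n\}$ and verify that it lies in $S$. I would fix $u\in S_D$ (nonempty by (A1)) together with the constant $\delta_u>0$ and unit vectors $r_i$ furnished by (A4). Since $\{u_n\}$ is bounded (Lemma~\ref{22}), it admits weak cluster points, and if $u_{n_k}\rightharpoonup z$, then $\|u_{n_k}-z_{n_k}\|\to 0$ forces $z_{n_k}\rightharpoonup z$, so $z\in C$ because $C$ is weakly closed. Lemma~\ref{333} then forces $z\in S_D$ or $F(z)=0$; in either case $z\in S_D\cup A$. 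Because Lemma~\ref{444} gives at most one weak cluster point in $S_D$, I may choose $u$ in (A4) to be that point whenever one exists, so that every weak cluster point of $\{u_n\}$ lies in $A_u=\{u\}\cup A$.

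The second step is to confine the tail of $\{u_n\}$ to a single set $\Omega(y_*,\delta_u/2)$. Lemma~\ref{444} furnishes $N_1$ with $u_n\in\Omega=\bigcup_{y\in A_u}\Omega(y,\delta_u/2)$ for $n\ge N_1$, while Lemma~\ref{22} furnishes $N_2$ with $\|u_{n+1}-u_n\|<\delta_u/2$ for $n\ge N_2$. For $n\ge\max\{N_1,N_2\}$ with $u_n\in\Omega(y_1,\delta_u/2)$, each unit vector $r_i$ associated with $y_1$ satisfies, by Cauchy--Schwarz,
\begin{equation*}
|\langle r_i,u_{n+1}-y_1\rangle|\le |\langle r_i,u_n-y_1\rangle|+\|u_{n+1}-u_n\|<\frac{\delta_u}{2}+\frac{\delta_u}{2}=\delta_u,
\end{equation*}
so $u_{n+1}\in\Omega(y_1,\delta_u)$. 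Since also $u_{n+1}\in\Omega(y_2,\delta_u/2)\subseteq\Omega(y_2,\delta_u)$ for some $y_2\in A_u$, the pairwise disjointness in (A4) forces $y_2=y_1$. A routine induction then yields a fixed $y_*\in A_u$ with $u_n\in\Omega(y_*,\delta_u/2)$ for all sufficiently large $n$.

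For the final step, let $z$ be any weak cluster point of $\{u_n\}$, say $u_{n_k}\rightharpoonup z$. Weak continuity of each functional $\langle r_i,\cdot\rangle$ associated with $y_*$, together with $|\langle r_i,u_{n_k}-y_*\rangle|<\delta_u/2$ for large $k$, gives $|\langle r_i,z-y_*\rangle|\le\delta_u/2<\delta_u$. On the other hand $z\in A_u$, and if $z\ne y_*$ then $z\in\Omega(z,\delta_u)$ while (A4) rules out $z\in\Omega(y_*,\delta_u)$, contradicting the previous bound. Hence $z=y_*$, and the bounded sequence $\{u_n\}$, having $y_*$ as its unique weak cluster point, converges weakly to $y_*$. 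Finally, if $y_*=u$ then $y_*\in S_D\subseteq S$; if instead $y_*\in A$, then $y_*\in C$ and $F(y_*)=0$, so $\langle F(y_*),w-y_*\rangle=0\ge 0$ for all $w\in C$ and $y_*\in S$ again.

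The main obstacle I anticipate is the transition argument that keeps $\{u_n\}$ inside a single $\Omega(y_*,\delta_u/2)$: it depends on the $\delta_u/2$ buffer becoming available only once $\|u_{n+1}-u_n\|$ is sufficiently small, and on coordinating the choice of $u\in S_D$ used to instantiate (A4) with the potential weak cluster point of $\{u_n\}$ in $S_D$. Everything else is a routine synthesis of the earlier lemmas with the discreteness structure imposed by (A4).
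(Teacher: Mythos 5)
Your proposal is correct and uses essentially the same mechanism as the paper: the vanishing consecutive differences $\|u_{n+1}-u_n\|\to 0$ combined with the pairwise disjointness of the sets $\Omega(y,\delta_u)$ from (A4) prevent the iterates from migrating between neighborhoods of distinct points of $A_u$, which forces a unique weak cluster point. The only differences are presentational: you run a direct induction to confine the tail to one $\Omega(y_*,\delta_u/2)$ and then identify the weak limit (also verifying explicitly that $y_*\in S$, including the case $F(y_*)=0$), whereas the paper argues by contradiction from two hypothetical cluster points; your inductive formulation in fact makes rigorous the paper's slightly informal claim that consecutive iterates land in the half-size neighborhoods of the two cluster points.
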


\begin{proof}
From Lemma \ref{22} it follows that $\lim\limits_{n\to\infty}\|u_{n+1}-u_n\|=0$, then $\exists N_2 > N_1$, $\forall n\geq N_2$, such that
\begin{equation}\label{4.01}
  \|u_{n+1}-u_n\|<\frac{\delta_u}{4}.
\end{equation}
Assume that $\{u_n\}$ has weak cluster points with more than one. Let $y_1$ and $y_2$ denote any two cluster points of $\{u_n\}$ and $y_1\neq y_2$. It follows from Lemma \ref{444} that $\exists N_3 \geq N_2>N_1$, we have $u_{N_3}\in \Omega(y_1, \frac{\delta_u}{2})$ and $u_{N_3+1}\in \Omega(y_2, \frac{\delta_u}{2})$.
That is for $i=1,\cdots,m$,
\begin{equation}\label{4.1}
  -\frac{\delta_u}{2}<\langle r_i, u_{N_3}-y_1 \rangle<\frac{\delta_u}{2}.
\end{equation}
Since $u_{N_3+1}\in \Omega(y_2, \frac{\delta_u}{2})$, i.e., $u_{N_3+1}\in \Omega(y_2, \delta_u)$, and according to $\Omega(y_1, \delta_u)\cap \Omega(y_2, \delta_u)=\emptyset$, it follows that $u_{N_3+1}\notin \Omega(y_1, \delta_u)$. It implies that $\exists\ r_j$, $j\in \{1,\cdots m\}$ such that
\begin{equation}\label{4.2}
  \langle r_j, u_{N_3+1}-y_1 \rangle\geq\delta_u\ \text{or}\ \langle r_j, u_{N_3+1}-y_1\rangle\leq -\delta_u.
\end{equation}
For $i = j$ in (\ref{4.1}), combined with (\ref{4.2}), we get
\begin{align*}
 \frac{\delta_u}{2}&\leq|\langle r_j, u_{N_3+1}-u_{N_3} \rangle|\\
  &\leq\|r_j\|\|u_{N_3+1}-u_{N_3}\|\\
  &=\|u_{N_3+1}-u_{N_3}\|.
\end{align*}
Then, from (\ref{4.01}), we can obtain
\begin{equation*}
  \|u_{N_3+1}-u_{N_3}\|< \frac{\delta_u}{4}.
\end{equation*}
Consequently, we deduce
\begin{equation*}
  \frac{\delta_u}{2}\leq\|u_{N_3+1}-u_{N_3}\|< \frac{\delta_u}{4},
\end{equation*}
which clearly leads to a contradiction. Therefore, there is only one weak cluster point of the sequence $\{u_n\}$ in $S$, i.e., $u_n\rightharpoonup u^\ell$.
\end{proof}

\begin{theorem}\label{th3.1}
Assume that conditions (A1)-(A5) are satisfied. The sequence $\{u_n\}$ generated by Algorithm \ref{AL1} converges strongly to a point $u^\ell\in S$.
\end{theorem}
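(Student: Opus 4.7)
The plan is to upgrade the weak convergence of Lemma \ref{555} to strong convergence by exploiting (A5). The crucial choice is that I will apply (A5) not at the weak limit $u^\ell$ itself but at an auxiliary point $u\in S_D$, which exists by (A1). The reason is that $u\in S_D$ yields the one-sided inequality $\langle F(z_n), z_n-u\rangle\geq 0$ for every $n$ (since $z_n\in C$), whereas for $u^\ell$ we only know $u^\ell\in S$, and $u^\ell$ may a priori lie in the auxiliary set $A$ where $F$ vanishes---in which case the sign of $\langle F(z_n), z_n-u^\ell\rangle$ would not be controlled.

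Fix $u\in S_D$. Applying the projection characterization from Lemma \ref{lm:2.1}(i) to $z_n=P_C(u_n-\lambda_n F(u_n))$ with the test point $u\in C$ gives $\langle F(u_n), z_n-u\rangle\leq \tfrac{1}{\lambda_n}\|u_n-z_n\|\,\|z_n-u\|$. Splitting $\langle F(z_n), z_n-u\rangle$ into $\langle F(z_n)-F(u_n), z_n-u\rangle+\langle F(u_n), z_n-u\rangle$ and bounding the first summand by the Lipschitz condition (A2) produces
\begin{equation*}
0\leq \langle F(z_n), z_n-u\rangle \leq \Bigl(L+\tfrac{1}{\lambda_n}\Bigr)\|u_n-z_n\|\,\|z_n-u\|.
\end{equation*}
On the other hand, (A5) with $z=u$ supplies a constant $c_u>0$ and an index $N_u$ with $\langle F(z_n), z_n-u\rangle\geq c_u\|z_n-u\|^{2+\varepsilon}$ for all $n\geq N_u$ (the absolute value is dropped using the sign just established). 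Combining the two inequalities and dividing by $\|z_n-u\|$ on those indices where it is nonzero yields
\begin{equation*}
c_u\|z_n-u\|^{1+\varepsilon}\leq \Bigl(L+\tfrac{1}{\lambda_n}\Bigr)\|u_n-z_n\|.
\end{equation*}

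Since Lemma \ref{lll} guarantees that $\lambda_n$ is bounded away from $0$ and Lemma \ref{22} gives $\|u_n-z_n\|\to 0$, the right-hand side tends to $0$, forcing $\|z_n-u\|\to 0$; the triangle inequality then yields $\|u_n-u\|\to 0$. Comparing this strong limit with the weak convergence $u_n\rightharpoonup u^\ell$ of Lemma \ref{555} identifies $u=u^\ell$ and promotes the convergence of $\{u_n\}$ to strong. I expect the only genuine obstacle to be the conceptual point highlighted in the first paragraph: recognizing that (A5) must be anchored at an element of $S_D$ rather than at $u^\ell$, since this is precisely what aligns the sign of $\langle F(z_n), z_n-u\rangle$ with the projection-based upper bound and closes the argument.
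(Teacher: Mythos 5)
Your projection-based upper bound $\langle F(z_n),z_n-u\rangle\le (L+\tfrac1{\lambda_n})\|u_n-z_n\|\,\|z_n-u\|$ and the sign argument for $u\in S_D$ are both correct, but the load-bearing step --- invoking (A5) at an auxiliary point $u\in S_D$ rather than at the weak limit $u^\ell$ --- is exactly where your argument departs from what (A5) can actually deliver. Read literally, (A5) quantifies over all $z\in C$, and under that reading your proof goes through; but that reading cannot be the intended one, because it would make your argument prove $z_n\to u$ for \emph{every} $u\in S_D$, i.e.\ it would force the limit into $S_D$. The paper's whole point is to allow limits in $S\setminus S_D$: in its illustrating example ($F(z)=z^2$ on $C=[-1,1]$, $S_D=\{-1\}$, $S=\{0,-1\}$) the paper verifies (A5) only along $z=u^\ell$, and when $z_n\to 0\notin S_D$ the ratio at $z=-1\in S_D$ is $\frac{z_n^2}{(z_n+1)^2}\to 0$, so the ``for all $z\in C$'' version of (A5) fails there; the same happens in Example 6.1, where the iterates converge to $\pm1\notin S_D$. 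Consistently, the paper's proof uses (A5) only in the form $|\langle F(z_n),z_n-u^\ell\rangle|\ge c\|z_n-u^\ell\|^{2+\varepsilon}$ at the weak limit $u^\ell$ furnished by Lemma \ref{555}.

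Under that intended reading your key inequality is unavailable precisely in the difficult case, namely $u^\ell\in A=\{p\in C:F(p)=0\}\setminus S_D$, which is the case your first paragraph explicitly decides to sidestep. The paper handles it head-on: if $u^\ell\in S_D$ one drops the absolute value and telescopes the descent inequality (\ref{3.31}) to get $\sum\|z_n-u^\ell\|^{2+\varepsilon}<\infty$; if $u^\ell\notin S_D$ one shows $\|F(z_n)\|\to 0$ by a contradiction argument built on quasimonotonicity (testing $F$ at $p$ and at perturbed points $p+\theta_n\omega_n$ with $\omega_n=F(z_n)/\|F(z_n)\|^2$ to conclude that otherwise $u^\ell\in S_D$), and then $c\|z_n-u^\ell\|^{1+\varepsilon}\le\|F(z_n)\|\to 0$ gives strong convergence. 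Your proposal contains no counterpart to this second case, so relative to the paper's actual hypotheses and proof there is a genuine gap: the anchoring of (A5) at an arbitrary element of $S_D$ over-reads the assumption, and the case $u^\ell\in A$ is left unproved.
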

\begin{proof}
From $u_{n+1}=z_n-\lambda_n(F(z_n)-F(u_n))$, we have
\begin{align}\label{3.1}
  \|u_{n+1}-u^\ell\|^2=&\|z_n-\lambda_n(F(z_n)-F(u_n))-u^\ell\|^2\notag\\
  =&\|z_n-u^\ell\|^2+\lambda_n^2\|F(z_n)-F(u_n)\|^2-2\lambda_n\langle F(z_n)-F(u_n), z_n-u^\ell \rangle\notag\\
  =&\|u_n-u^\ell\|^2+\|z_n-u_n\|^2+2\langle z_n-u_n, u_n-u^\ell\rangle+\lambda_n^2\|F(z_n)-F(u_n)\|^2\notag\\
  &-2\lambda_n\langle F(z_n)-F(u_n), z_n-u^\ell\rangle\notag\\
  =&\|u_n-u^\ell\|^2+\|z_n-u_n\|^2-2\langle z_n-u_n, z_n-u_n\rangle+2\langle z_n-u_n, z_n-u^\ell\rangle\notag\\
  &+\lambda_n^2\|F(z_n)-F(u_n)\|^2-2\lambda_n\langle F(z_n)-F(u_n), z_n-u^\ell\rangle.
\end{align}
According to $z_n = P_C(u_n-\lambda_nF(u_n))$ and $u^\ell\in S\subseteq C$, we obtain
\begin{equation*}
  \langle z_n-u_n+\lambda_nF(u_n),  z_n-u^\ell\rangle\leq 0,
\end{equation*}
which implies that
\begin{equation}\label{3.2}
  \langle z_n-u_n,  z_n-u^\ell\rangle\leq -\lambda_n\langle F(u_n), z_n-u^\ell\rangle.
\end{equation}
Thus, by (\ref{3.1}), (\ref{3.2}) and the definition of $\{\lambda_{n+1}\}$, we have
\begin{align}\label{3.31}
  \|u_{n+1}-u^\ell\|^2\leq&\|u_{n}-u^\ell\|^2-\|z_n-u_n\|^2+\lambda_n^2\|F(z_n)-F(u_n)\|^2-2\lambda_n\langle F(z_n), z_n-u^\ell\rangle\notag\\
  \leq&\|u_{n}-u^\ell\|^2-(1-\frac{\mu^2\lambda_n^2}{\lambda_{n+1}^2})\|z_n-u_n\|^2-2\lambda_n\langle F(z_n), z_n-u^\ell \rangle.
\end{align}
Since $\lambda_n\to \lambda$ as $n\to\infty$ and $\mu\in(0,1)$, it follows that $\lim\limits_{n\to\infty}(1-\frac{\mu^2\lambda_n^2}{\lambda_{n+1}^2})=1-\mu^2>0$.
Therefore, there exists $N>N_3$ such that for any $n\geq N$, there is
\begin{equation}\label{3.32}
  1-\frac{\mu^2\lambda_n^2}{\lambda_{n+1}^2}
  >\frac{1}{2}(1-\mu^2)>0.
\end{equation}
From the fact that $u_n\rightharpoonup u^\ell$ and $\lim\limits_{n\to\infty}\|u_n-z_n\|=0$, it follows that $z_n\rightharpoonup u^\ell$. Hence, by assumption \emph{(A5)}, we have $\exists N'>N$ and $c>0$, $\forall n\geq N'$ such that
\begin{equation}\label{3.3}
  |\langle F(z_n), z_n-u^\ell\rangle|\geq c\|z_n-u^\ell\|^{2+\varepsilon}\geq 0.
\end{equation}

For the purpose of discussion, we will divide the proof into two cases as follows for discussion.\\
\textbf{\emph{Case I}}\ If $u^\ell\in S_D$, then by the fact that $z_n\in C $, we have
\begin{equation*}
  \langle F(z_n), z_n-u^\ell \rangle\geq 0.
\end{equation*}
Hence, combining with (\ref{3.3}), we get
\begin{equation}\label{3.4}
  \langle F(z_n), z_n-u^\ell\rangle\geq c\|z_n-u^\ell\|^{2+\varepsilon}\geq 0.
\end{equation}
As a result, in accordance with (\ref{3.31}), (\ref{3.32}) and (\ref{3.3}), we obtain that for any $n\geq N'$,
\begin{align*}
  \|u_{n+1}-u^\ell\|^2\leq&\|u_{n}-u^\ell\|^2-2\lambda_n\langle F(z_n), z_n-u^\ell \rangle\notag\\
  \leq&\|u_{n}-u^\ell\|^2-2\tilde{\lambda}c\|z_n-u^\ell\|^{2+\varepsilon},
\end{align*}
where $\tilde{\lambda}:=\min\{\frac{\mu}{L}, \lambda_1\}$. And we obtain
\begin{equation*}
  2\tilde{\lambda}c\|z_n-u^\ell\|^{2+\varepsilon}\leq\|u_{n}-u^\ell\|^2-\|u_{n+1}-u^\ell\|^2.
\end{equation*}
Therefore,
\begin{equation*}
  2\tilde{\lambda}c\sum\limits_{k=N}^{n}\|z_k-u^\ell\|^{2+\varepsilon}\leq\|u_{N}-u^\ell\|^2-\|u_{n+1}-u^\ell\|^2.
\end{equation*}
As $\{u_n\}$ is bounded, we can get $\sum\limits_{k=N}^{\infty}\|z_k-u^\ell\|^{2+\varepsilon}<\infty$. Thus, $\lim\limits_{n\to\infty}\|z_n-u^\ell\|=0$.
Hence, combining this with the fact that $\lim\limits_{n\to\infty}\|u_n-z_n\|=0$ of Lemma \ref{22}, we get
\begin{equation*}
  \|u_n-u^\ell\|\leq\|u_n-z_n\|+\|z_n-u^\ell\|\to0,\ n\to\infty.
\end{equation*}
\textbf{\emph{Case II}}\ If $u^\ell\notin S_D$, that is, $u^\ell\in A$. Next, we prove $\|F(z_n)\|\to 0$ as $n\to\infty$.\\ Since $u_{n}\rightharpoonup u^\ell$ and $\|u_{n}-z_{n}\|\to 0$ as $n\to\infty$, then $z_{n}\rightharpoonup u^\ell\in C$. Based on the Lipschitz continuity of $F$ and the sequence $\{z_{n}\}$ is bounded, it follows that $0\leq\limsup\limits_{n\to\infty}\|F(z_n)\|<+\infty$.\\
Assume that $\limsup\limits_{n\to\infty}\|F(z_n)\|\neq0$, i.e., $\limsup\limits_{n\to\infty}\|F(z_n)\|>0$. For the sake of generality, it may be useful to take $\lim\limits_{n\to\infty}\|F(z_n)\|=K>0$. Therefore, for all $n\geq N'$, $\|F(z_n)\| > \frac{K}{2}$ holds. In accordance with the definition of $\{z_n\}$ and Lemma \ref{lm:2.1}(i), we get
\begin{equation*}
  \langle u_n-\lambda_nF(u_n)-z_n, p-z_n \rangle\leq0,\ \forall p\in C,
\end{equation*}
which is equivalent to
\begin{equation*}
  \frac{1}{\lambda_n}\langle u_n-z_n, p-z_n \rangle\leq\langle F(u_n), p-z_n\rangle,\ \forall p\in C.
\end{equation*}
As a consequence, we can derive
\begin{equation}\label{1}
  \frac{1}{\lambda_n}\langle u_n-z_n, p-z_n\rangle-\langle F(u_n)-F(z_n), p-z_n\rangle\leq\langle F(z_n), p-z_n\rangle,\ \forall p\in C.
\end{equation}
Since $\lim\limits_{n\to\infty}\|u_n-z_n\|=0$ and $F$ is $L$-Lipschitz continuous mapping, it follows that
\begin{equation*}
  \lim\limits_{n\to\infty}\|F(u_n)-F(z_n)\|\leq\lim\limits_{n\to\infty}L\|u_n-z_n\|=0.
\end{equation*}
From the fact that $\{z_n\}$ is bounded and $\lim\limits_{n\to\infty}\lambda_n=\lambda>0$, and by making $n\to\infty$ in (\ref{1}) we get
\begin{equation}\label{2}
  0\leq\liminf\limits_{n\to\infty}\langle F(z_n), p-z_n\rangle\leq\limsup\limits_{n\to\infty}\langle F(z_n), p-z_n\rangle<+\infty.
\end{equation}
If $\limsup\limits_{n\to\infty}\langle F(z_n), p-z_n\rangle>0$, then there exists a subsequence $\{z_{n_k}\}$ of $\{z_n\}$ such that $\lim\limits_{k\to\infty}\langle F(z_{n_k}), p-z_{n_k}\rangle>0$. Thus there exists $k_0$ such that for all $k\geq k_0$, $\langle F(z_{n_k}), p-z_{n_k}\rangle>0$. Since $F$ is quasimonotone, we have $\langle F(p), p-z_{n_k}\rangle\geq 0$ for all $k \geq k_0$. Therefore, as $k \to \infty$, it follows that $\langle F(p), p - u^\ell \rangle \geq 0$, and this implies $u^\ell \in S_D$. \\
If $\limsup\limits_{n\to\infty}\langle F(z_n), p-z_n\rangle=0$, then from (\ref{2}), we have
\begin{equation*}
  \lim\limits_{n\to\infty}\langle F(z_n), p-z_n\rangle=\liminf\limits_{n\to\infty}\langle F(z_n), p-z_n\rangle=\liminf\limits_{n\to\infty}\langle F(z_n), p-z_n\rangle=0.
\end{equation*}
We select the sequence $\{\rho_n\}$ that is positive and decreasing, with $\lim_{n \to \infty} \rho_n = 0$. Define $\theta_n = |\langle F(z_n), p-z_n\rangle| + \rho_n > 0$ and $\omega_n = \frac{F(z_n)}{\|F(z_n)\|^2}$, for all $n \geq N'$, which ensures that $\langle F(z_n), \omega_n\rangle = 1$. Thus, we have $\langle F(z_n), p+\theta_n\omega_n - z_n \rangle > 0$, $\forall n \geq N'$. Given that $F$ is quasimonotone, it follows that $\langle F(p + \theta_n \omega_n), p+\theta_n\omega_n - z_n \rangle \geq 0$. This implies that for all $n \geq N'$,
\begin{align}\label{3}
  \langle F(p), p+\theta_n\omega_n-z_n\rangle=&\langle F(p)-F(p+\theta_n\omega_n), p+\theta_n\omega_n-z_n\rangle\notag\\
  &+\langle F(p+\theta_n\omega_n), p+\theta_n\omega_n-z_n\rangle\notag\\
  \geq&\langle F(p)-F(p+\theta_n\omega_n), p+\theta_n\omega_n-z_n\rangle\notag\\
  \geq&-\|F(p)-F(p+\theta_n\omega_n)\|\|p+\theta_n\omega_n-z_n\|\notag\\
  \geq&-\theta_nL\|\omega_n\|\|p+\theta_n\omega_n-z_n\|\notag\\
  =&-\theta_n\frac{L}{\|F(z_n)\|}\|p+\theta_n\omega_n-z_n\|\notag\\
  \geq&-\theta_n\frac{2L}{K}\|p+\theta_n\omega_n-z_n\|.
\end{align}
Since $\lim\limits_{n\to\infty}\theta_n=0$ and $\{\|p + \theta_n\omega_n - z_n\|\}$ is bounded, then for all $p \in C$, $\langle F(p), p - u^\ell \rangle \geq 0$ holds by taking the limit in (\ref{3}). Consequently, we have $u^\ell \in S_D$.

In conclusion, combining the two different situations described above, we get $u^\ell \in S_D$, which contradicts the assumption that $u^\ell \notin S_D$. Therefore, we obtain $\limsup\limits_{n\to\infty}\|F(z_n)\|=0$. Thus,
\begin{equation*}
 \lim\limits_{n\to\infty}\|F(z_n)\|=\limsup\limits_{n\to\infty}\|F(z_n)\|=\liminf\limits_{n\to\infty}\|F(z_n)\|=0.
\end{equation*}
Then, according to (\ref{3.3}), we obtain
\begin{align*}
  c\|z_n-u^\ell\|^{2+\varepsilon}\leq|\langle F(z_n), z_n-u^\ell\rangle|\leq\|F(z_n)\|\|z_n-u^\ell\|.
\end{align*}
We suppose that the algorithm does not stop at a finite step, that is, $\|z_n-u^\ell\|\neq0$. Thus, we have
\begin{equation*}
  c\|z_n-u^\ell\|^{1+\varepsilon}\leq\|F(z_n)\|.
\end{equation*}
Hence, by $\lim\limits_{n\to\infty}\|F(z_n)\|=0$, we get $\|z_n-u^\ell\|\to0$ as $n\to\infty$. Thus, based on the fact that $\lim\limits_{n\to\infty}\|u_n-z_n\|=0$, we obtain
\begin{equation*}
  \|u_n-u^\ell\|\leq\|u_n-z_n\|+\|z_n-u^\ell\|\to0,\ n\to\infty.
\end{equation*}
Finally, we obtain that the sequence $\{u_n\}$ converges strongly to $u^\ell\in S$.
\end{proof}

\begin{remark}
It is easy to observe that if $F$ is $\eta$-strongly pseudomontone mapping, then our proposed condition (A5) is satisfied. However, the converse does not hold, meaning that condition (A5) is weaker than $\eta$-strong pseudomontone. For example, take $H=\mathbb{R}$, $C = [-1, 1]$ and $F$ to be the following quasimonotone and continuous mapping as considered in \cite{yang3}:
\begin{equation*}
F(z)=
\begin{cases}
2z-1,&z>1,\\
z^2,&z\in [-1,1],\\
-2z-1,&z<-1.
\end{cases}
\end{equation*}
Consequently, we get $S_D=\{-1\}$ and $S=\{0,-1\}$. We can take $u^\ell\in S$(i.e., $z_n\rightarrow u^\ell$) and $\varepsilon=1$ in condition (A5). Since the algorithm is assumed not to stop at a finite step, $\|z_n-u^\ell\|\neq0$. Define
\begin{align*}
  I_1&=\{n\in \mathbb{N}: z_n>1\},\\
  I_2&=\{n\in \mathbb{N}: z_n\in [-1,1]\},\\
  I_3&=\{n\in \mathbb{N}: z_n<-1\}.
\end{align*}
If $z_n\to 0$ as $n\to\infty$, that is, $u^\ell=0\notin S_D$, then, by above \emph{Case II} in Theorem \ref{th3.1}, we have $\lim\limits_{n\to\infty}\|F(z_n)\|=0$. This indicates that there exists $N$ such that for all $n\geq N$, we have $n\in I_2$. Therefore, we can easily obtain
\begin{equation*}
  \frac{|\langle F(z_n), z_n-u^\ell \rangle|}{\|z_n-u^\ell\|^{3}}=\frac{|F(z_n)|}{\|z_n-u^\ell\|^{2}}=\frac{|z_n|^2}{\|z_n\|^{2}}=1>0,
\end{equation*}
i.e.,
\begin{equation*}
  \liminf\limits_{n\to\infty}\frac{|\langle F(z_n), z_n-u^\ell \rangle|}{\|z_n-u^\ell\|^{3}}>0.
\end{equation*}
If $z_n\to -1$ as $n\to\infty$, that is, $u^\ell=-1$, then we have $\lim\limits_{n\to\infty}\|F(z_n)\|=1$. Hence,
\begin{equation*}
  \frac{|\langle F(z_n), z_n-u^\ell \rangle|}{\|z_n-u^\ell\|^{3}}=\frac{|F(z_n)|}{\|z_n-u^\ell\|^{2}}>0,
\end{equation*}
and
\begin{equation*}
  \liminf\limits_{n\to\infty}\frac{|\langle F(z_n), z_n-u^\ell \rangle|}{\|z_n-u^\ell\|^{3}}=\liminf\limits_{n\to\infty}\frac{|F(z_n)|}{\|z_n-u^\ell\|^{2}}=+\infty.
\end{equation*}
This demonstrates that condition (A5) is satisfied for a quasimonotone mapping $F$. And it is easy to verify that $F$ is not a $\eta$-strongly pseudomontone mapping. Indeed, take $u=-1$ and $v=0$, we have $\langle F(u), v-u \rangle=1$ and $\langle F(v), v-u \rangle=0$.
\end{remark}

\section{Convergence Rate}\label{sec5}
In this section, we aim to analyze the convergence rate of the sequence $\{u_n\}$ which is generated by Algorithm \ref{AL1}, provided that conditions \emph{(A1)}-\emph{(A4)} are satisfied. Additionally, we set $\varepsilon = 0$ in condition \emph{(A5)} and take the following assumption into reconsideration:
\begin{itemize}
  \item \emph{(A5')}\ The sequence $\{z_n\}\subset C$ and $z_n\rightharpoonup z$, for $0\leq\alpha\leq1$, there is $\liminf\limits_{n\to\infty}\frac{n^\alpha\langle F(z_n), z_n-z \rangle}{\|z_n-z\|^2}>0$.
\end{itemize}
\subsection{Sublinear Convergence}
We analyze the sublinear convergence rate of the sequence $\{u_n\}$ in this subsection. In assumption \emph{(A5')}, if $0< \alpha\leq1$, then we can derive the following theorem.
\begin{theorem}
Assume that conditions (A1)-(A4) and (A5') are satisfied. Let $\{u_n\}$ be the sequence generated by Algorithm \ref{AL1}. The following two results hold:\\
\emph{(i)}  If $0<\alpha<1$, then $\{u_n\}$ converges sublinearly to a point $u^\ell\in S$ at any order.\\
\emph{(ii)}  If $\alpha=1$, then $\{u_n\}$ converges sublinearly to a point $u^\ell\in S$.
\end{theorem}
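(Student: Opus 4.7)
The plan is to promote the descent inequality (\ref{3.31}) from Theorem~\ref{th3.1} to a quantitative recurrence of the form $a_{n+1} \le a_n\bigl(1 - c/n^\alpha\bigr) + \varepsilon_n$ for $a_n := \|u_n - u^\ell\|^2$ with summable error $\varepsilon_n$, and then to read off the rate of $a_n$ by iterating this recurrence in the two regimes $0<\alpha<1$ and $\alpha=1$. The strong convergence already provided by Theorem~\ref{th3.1} is the essential input.

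To set up the recurrence, I would first observe that, for $n$ large, by (\ref{3.32}) the coefficient $1 - \mu^2\lambda_n^2/\lambda_{n+1}^2$ is bounded below by $(1-\mu^2)/2>0$, and $\lambda_n\ge \tilde\lambda := \min\{\mu/L,\lambda_1\}$, so (\ref{3.31}) gives
\begin{equation*}
a_{n+1} \le a_n - \tfrac{1-\mu^2}{2}\|u_n - z_n\|^2 - 2\tilde\lambda\,\langle F(z_n), z_n - u^\ell\rangle .
\end{equation*}
Condition (A5') provides constants $c>0$ and $N'$ such that, for $n\ge N'$, $\langle F(z_n), z_n - u^\ell\rangle \ge (c/n^\alpha)\|z_n - u^\ell\|^2$; this positivity holds whether $u^\ell\in S_D$ or $u^\ell\in A$, so it subsumes the Case~I/Case~II split of Theorem~\ref{th3.1}. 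Combining this with the elementary bound $\|z_n - u^\ell\|^2 \ge \tfrac12\|u_n - u^\ell\|^2 - \|u_n - z_n\|^2$ and with $\sum_n\|u_n - z_n\|^2 < \infty$ (itself obtained by telescoping the descent inequality and using that $\{a_n\}$ is bounded) yields the master recurrence
\begin{equation*}
a_{n+1} \le a_n\Bigl(1 - \frac{\tilde\lambda c}{n^\alpha}\Bigr) + \varepsilon_n, \qquad \sum_{n\ge N'} \varepsilon_n < \infty.
\end{equation*}

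Iterating via $1-x\le e^{-x}$ then gives
\begin{equation*}
a_n \le C\exp\!\Bigl(-\tilde\lambda c\sum_{k=N'}^{n-1} k^{-\alpha}\Bigr) + R_n ,
\end{equation*}
where $R_n$ is the convolution remainder generated by the $\varepsilon_k$'s. For $0<\alpha<1$, the exponent is asymptotic to $-\tilde\lambda c\, n^{1-\alpha}/(1-\alpha)$, so the principal term and, by a comparison argument, also $R_n$ decay faster than any polynomial $n^{-p}$, which is the ``sublinear at any order'' assertion in (i). For $\alpha=1$, the exponent behaves like $-\tilde\lambda c\log n$ and the principal term is $O(n^{-\tilde\lambda c})$, which absorbs $R_n$ at the same polynomial order and yields the sublinear rate in (ii).

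The principal obstacle is the rigorous control of the remainder $R_n$ in the presence of the multiplicative factor $1 - \tilde\lambda c/n^\alpha$; rather than manipulating the iterated product directly, this is best handled by a discrete Gr{\"o}nwall-style induction on the ansatz $a_n \le C\exp(-C'n^{1-\alpha})$ when $\alpha<1$, and on $a_n \le Cn^{-\gamma}$ when $\alpha=1$. A minor bookkeeping point is that the quantitative lower bound in (A5') is stated along the iterate sequence $\{z_n\}$, so the transfer from $\|z_n - u^\ell\|^2$ to $\|u_n - u^\ell\|^2$ must be done carefully using $\|u_n-z_n\|\to 0$, which is precisely the role of the auxiliary inequality used above.
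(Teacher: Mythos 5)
Your reduction to the recurrence $a_{n+1}\le a_n\bigl(1-\tilde\lambda c/n^\alpha\bigr)+\varepsilon_n$ with $\varepsilon_n$ merely summable cannot deliver either stated rate, and this is a genuine gap rather than a bookkeeping issue. In the iterated form $a_{n+1}\le C\exp\bigl(-\tilde\lambda c\sum_{k}k^{-\alpha}\bigr)+R_{n+1}$, the remainder contains the last error with an empty product, so $R_{n+1}\ge\varepsilon_n$; hence the best upper bound your recurrence can ever produce is of the order of $\varepsilon_n$ itself, and summability gives no pointwise decay rate (e.g.\ $\varepsilon_n=n^{-2}$ is summable but not $o(n^{-p})$ for $p>2$, and $\varepsilon_n=1/(n\log^2 n)$ is summable but not $O(n^{-\gamma})$ for any $\gamma>1$). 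Consequently the claim that ``by a comparison argument, also $R_n$ decays faster than any polynomial'' is false, and the proposed Gr\"onwall-type induction on the ansatz $a_n\le C\exp(-C'n^{1-\alpha})$ (resp.\ $a_n\le Cn^{-\gamma}$) breaks at the inductive step unless $\varepsilon_n$ is itself super-polynomially (resp.\ polynomially) small, which you have not established. A secondary point: you invoke the strong convergence of Theorem \ref{th3.1} as ``essential input,'' but that theorem assumes (A5), which is not among the hypotheses here; what is available under (A1)--(A4) is only the weak convergence of Lemma \ref{555}, which fixes the point $u^\ell\in S\subseteq C$ at which (\ref{3.31}) and (A5') are applied.

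The repair is to not convert the $\|u_n-z_n\|^2$ term into an additive error at all, which is exactly how the paper proceeds. In (\ref{3.31}) keep both negative terms and use $\|u_n-u^\ell\|^2\le 2\|u_n-z_n\|^2+2\|z_n-u^\ell\|^2$ (equivalent to your ``elementary bound''): then
$\bigl(1-\mu^2\lambda_n^2/\lambda_{n+1}^2\bigr)\|z_n-u_n\|^2+\tfrac{2\lambda_n\tilde c}{n^\alpha}\|z_n-u^\ell\|^2\ge\tfrac12\min\bigl\{1-\mu^2\lambda_n^2/\lambda_{n+1}^2,\,2\tilde\lambda\tilde c\,n^{-\alpha}\bigr\}\|u_n-u^\ell\|^2$,
and since the first entry of the minimum tends to $1-\mu^2>0$ while the second tends to $0$, for all large $n$ the minimum is the $n^{-\alpha}$ term. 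Equivalently, after your substitution the coefficient of $\|u_n-z_n\|^2$ is $\tfrac{2\tilde\lambda c}{n^\alpha}-\tfrac{1-\mu^2}{2}\le 0$ eventually, so it can simply be discarded. This yields the homogeneous recurrence $a_{n+1}\le(1-\tau n^{-\alpha})a_n$ for $n\ge\bar N$ with no $\varepsilon_n$, and then your own treatment of the ``principal term'' (via $\ln(1-t)\le -t$ and the integral comparison of $\sum k^{-\alpha}$) is precisely the paper's argument: $a_n\le a_{\bar N}\exp\bigl(-\tfrac{\tau}{1-\alpha}[n^{1-\alpha}-\bar N^{1-\alpha}]\bigr)$ for $0<\alpha<1$, which beats every polynomial rate, and $a_n\le(\bar N/n)^{\tau}a_{\bar N}$ for $\alpha=1$; strong convergence with the asserted rates then follows from the recurrence itself, without appealing to Theorem \ref{th3.1}.
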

\begin{proof}
As $u_n \rightharpoonup u^\ell$ and $\|u_n - z_n\| \to 0$ as $n\to\infty$, we deduce that $z_n\rightharpoonup u^\ell$. Noting that $\{z_n\}\subset C$, then by assumption \emph{(A5')}, we have $\exists \tilde{c}>0$, $\forall n\geq N'>N$,
\begin{equation}\label{321}
  \langle F(z_n), z_n-u^\ell\rangle\geq \frac{\tilde{c}}{n^\alpha}\|z_n-u^\ell\|^{2}.
\end{equation}
According to (\ref{3.32}), we get for all $n\geq N'> N$, $1-\frac{\mu^2\lambda_n^2}{\lambda_{n+1}^2}
>\frac{1-\mu^2}{2}>0$. We use
$\tilde{\lambda}>0$ to denote the lower bound of
$\{\lambda_n\}$. And since $\lim\limits_{n\to\infty}\frac{1}{n^\alpha}=0$, there exists $\bar{N}>N'$ such that $0<\frac{1}{n^\alpha}<\frac{1-\mu^2}{4\tilde{\lambda}\tilde{c}}$, $\forall n\geq \bar{N}$. Moreover,
\begin{align*}
  \|u_n-u^\ell\|^2=&\|u_n-z_n+z_n-u^\ell\|^2 \\
  =&\|u_n-z_n\|^2+\|z_n-u^\ell\|^2+2\langle u_n-z_n, z_n-u^\ell \rangle\\
  \leq&\|u_n-z_n\|^2+\|z_n-u^\ell\|^2+2\|u_n-z_n\|\|z_n-u^\ell\|\\
  \leq&2\|u_n-z_n\|^2+2\|z_n-u^\ell\|^2.
\end{align*}
Therefore, substituting (\ref{321}) into (\ref{3.31}) yields for all $n\geq \bar{N}$,
\begin{align}\label{325}
  \|u_{n+1}-u^\ell\|^2\leq&\|u_{n}-u^\ell\|^2-(1-\frac{\mu^2\lambda_n^2}{\lambda_{n+1}^2})\|z_n-u_n\|^2
  -\frac{2\lambda_n\tilde{c}}{n^\alpha}\|z_n-u^\ell\|^{2}\notag\\
  \leq&\|u_{n}-u^\ell\|^2-\frac{1-\mu^2}{2}\|z_n-u_n\|^2
  -\frac{2\tilde{\lambda}\tilde{c}}{n^\alpha}\|z_n-u^\ell\|^{2}\notag\\
  \leq&\|u_{n}-u^\ell\|^2-\min\{\frac{1-\mu^2}{2},\frac{2\tilde{\lambda}\tilde{c}}{n^\alpha}\}
  (\|z_n-u_n\|^2+\|z_n-u^\ell\|^{2})
  \notag\\
  \leq&\|u_{n}-u^\ell\|^2-\frac{1}{2}\min\{\frac{1-\mu^2}{2},2\tilde{\lambda}\tilde{c}\frac{1}{n^\alpha}\}
  \|u_{n}-u^\ell\|^2\notag\\
  =&\|u_{n}-u^\ell\|^2-\frac{\tilde{\lambda}\tilde{c}}{n^\alpha}
  \|u_{n}-u^\ell\|^2\notag\\
  =&(1-\frac{\tau}{n^\alpha})\|u_n-u^\ell\|^2,
\end{align}
where $\tau:=\tilde{\lambda}\tilde{c}$. \\
Let $a_n:=\|u_n-u^\ell\|^2$. Therefore, from (\ref{325}) we have
\begin{align*}
  a_{n+1}\leq&(1-\frac{\tau}{n^\alpha})a_n\leq\prod\limits_{k=\bar{N}}^{n}(1-\frac{\tau}{k^\alpha})a_{\bar{N}}.
\end{align*}
Thus,
\begin{equation*}\label{326}
  \ln a_{n+1}\leq\sum\limits_{k=\bar{N}}^{n}\ln(1-\frac{\tau}{k^\alpha})+\ln a_{\bar{N}}.
\end{equation*}
From the fact that $\ln(1-t)<-t$ when $t\in[0,1)$, we have
\begin{equation}\label{327}
  \ln a_{n+1}\leq\sum\limits_{k=\bar{N}}^{n}(-\frac{\tau}{k^\alpha})+\ln a_{\bar{N}}.
\end{equation}
(i) If $0<\alpha<1$, we have
\begin{equation}\label{328}
  \int_{k}^{k+1}\frac{1}{k^\alpha}dt\geq\int_{k}^{k+1}\frac{1}{s^\alpha}ds=\frac{1}{1-\alpha}s^{1-\alpha}\Big|_{k}^{k+1}
  =\frac{1}{1-\alpha}[(k+1)^{1-\alpha}-k^{1-\alpha}].
\end{equation}
Therefore, by (\ref{327}) and (\ref{328}), we can get
\begin{equation*}
   \ln a_{n+1}\leq-\frac{\tau}{1-\alpha}[(n+1)^{1-\alpha}-\bar{N}^{1-\alpha}]+\ln a_{\bar{N}}.
\end{equation*}
Thus, we have
\begin{equation*}
  a_{n+1}=\|u_{n+1}-u^\ell\|^2\leq e^{-\frac{\tau}{1-\alpha}[(n+1)^{1-\alpha}-\bar{N}^{1-\alpha}]}\cdot a_{\bar{N}},
\end{equation*}
i.e.,
\begin{equation*}
 \|u_{n+1}-u^\ell\|\leq e^{-\frac{\tau}{2(1-\alpha)}[(n+1)^{1-\alpha}-\bar{N}^{1-\alpha}]}\cdot (a_{\bar{N}})^\frac{1}{2}=o(\frac{1}{n^r}).
\end{equation*}
That is $\lim\limits_{n\to\infty}\frac{e^{-\frac{\tau}{1-\alpha}[(n+1)^{1-\alpha}-\bar{N}^{1-\alpha}]}}{\frac{1}{n^r}}=0$ ($\forall r>0$), we obtain that $\{u_n\}$ converges sublinearly to $u^\ell$ at any order.\\
(ii) If $\alpha=1$, we get
\begin{equation*}
  \int_{k}^{k+1}\frac{1}{k}dt\geq\int_{k}^{k+1}\frac{1}{s}ds=\ln s\Big|_{k}^{k+1}
  =\ln(k+1)-\ln k.
\end{equation*}
Then, from (\ref{327}), we obtain
\begin{equation*}
  \ln a_{n+1}\leq\sum\limits_{k=\bar{N}}^{n}(-\frac{\tau}{k})+\ln a_{\bar{N}}\leq-\tau(\ln(n+1)-\ln \bar{N})+\ln a_{\bar{N}}
  =-\ln(n+1)^\tau+\ln \bar{N}^\tau+\ln a_{\bar{N}}.
\end{equation*}
Hence, we have
\begin{equation*}
  a_{n+1}=\|u_{n+1}-u^\ell\|^2\leq e^{-\ln(n+1)^\tau}\cdot\bar{N}^\tau\cdot a_{\bar{N}}=\frac{1}{(n+1)^\tau}\bar{N}^\tau a_{\bar{N}}.
\end{equation*}
That is, for $r=\frac{\tau}{2}$, we obtain
\begin{equation*}
  \|u_{n+1}-u^\ell\|\leq (\frac{1}{n+1})^{\frac{\tau}{2}}\bar{N}^{\frac{\tau}{2}} (a_{\bar{N}})^{\frac{1}{2}}=O(\frac{1}{n^r}).
\end{equation*}
Thus the sublinear convergence in this case is related to $\tau$, i.e., to $\tilde{\lambda}$ and $\tilde{c}$.
\end{proof}

\subsection{Linear Convergence}
We analyze the linear convergence rate of the sequence $\{u_n\}$ in this subsection. In assumption \emph{(A5')}, if $\alpha=0$, then we can derive the following theorem.
\begin{theorem}
Assume that conditions (A1)-(A4) and (A5') are satisfied. The sequence $\{u_n\}$ generated by Algorithm \ref{AL1} $Q$-linear convergence to a point $u^\ell\in S$.
\end{theorem}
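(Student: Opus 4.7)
The plan is to specialize the recursion developed in the proof of the sublinear theorem to the case $\alpha=0$, and then combine the two negative terms on the right-hand side to produce a uniform contraction in $\|u_n - u^\ell\|^2$. The starting point is the inequality (\ref{3.31}) from the proof of Theorem \ref{th3.1}, namely
\begin{equation*}
\|u_{n+1}-u^\ell\|^2 \leq \|u_n-u^\ell\|^2 - \bigl(1-\tfrac{\mu^2\lambda_n^2}{\lambda_{n+1}^2}\bigr)\|z_n-u_n\|^2 - 2\lambda_n\langle F(z_n),\, z_n-u^\ell\rangle,
\end{equation*}
which holds regardless of the value of $\alpha$. By assumption (A5') with $\alpha=0$ there exist $\tilde c>0$ and $N'>N$ such that $\langle F(z_n), z_n-u^\ell\rangle \geq \tilde c \|z_n-u^\ell\|^2$ for all $n\geq N'$. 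Inserting this estimate and using Lemma \ref{lll} to lower-bound $\lambda_n$ by $\tilde\lambda := \min\{\lambda_1, \mu/L\}$ gives, for all $n\geq N'$,
\begin{equation*}
\|u_{n+1}-u^\ell\|^2 \leq \|u_n-u^\ell\|^2 - \bigl(1-\tfrac{\mu^2\lambda_n^2}{\lambda_{n+1}^2}\bigr)\|z_n-u_n\|^2 - 2\tilde\lambda\tilde c\,\|z_n-u^\ell\|^2.
\end{equation*}

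The next step is to turn the two weighted squared distances into a single bound on $\|u_n-u^\ell\|^2$. Since $\lim_{n\to\infty}\lambda_n=\lambda>0$ and $\mu\in(0,1)$, as observed in (\ref{3.32}), we have $1-\mu^2\lambda_n^2/\lambda_{n+1}^2 \to 1-\mu^2 > 0$, so there exists $\bar N\geq N'$ and a constant $\beta>0$ (for instance $\beta=(1-\mu^2)/2$) with $1-\mu^2\lambda_n^2/\lambda_{n+1}^2\geq\beta$ for all $n\geq\bar N$. Setting $\gamma := \min\{\beta,\, 2\tilde\lambda\tilde c\}$ and applying the elementary inequality $\|u_n-u^\ell\|^2 \leq 2\|u_n-z_n\|^2+2\|z_n-u^\ell\|^2$, we obtain
\begin{equation*}
\|u_{n+1}-u^\ell\|^2 \leq \|u_n-u^\ell\|^2 - \gamma\bigl(\|z_n-u_n\|^2+\|z_n-u^\ell\|^2\bigr) \leq \bigl(1-\tfrac{\gamma}{2}\bigr)\|u_n-u^\ell\|^2.
\end{equation*}
If $\gamma/2 \geq 1$ the algorithm terminates in one step, which is trivially $Q$-linear; otherwise set $q := \gamma/2 \in (0,1)$, take square roots, and deduce
\begin{equation*}
\|u_{n+1}-u^\ell\| \leq \sqrt{1-q}\,\|u_n-u^\ell\|,\qquad \forall n\geq\bar N,
\end{equation*}
which is exactly $Q$-linear convergence to $u^\ell\in S$ (the limit being provided by Theorem \ref{th3.1}).

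The most delicate point, and the one I would double-check carefully, is the step where the two distances $\|z_n-u_n\|$ and $\|z_n-u^\ell\|$ are recombined into a lower bound for $\|u_n-u^\ell\|$. Whichever of the two coefficients $\beta$ and $2\tilde\lambda\tilde c$ happens to be smaller dictates the contraction rate, and one must make sure that the triangle-inequality constant (the factor $2$) is absorbed consistently so that the resulting factor $1-q$ is genuinely less than $1$. Everything else, including the guarantee that $\lambda_n$ stays uniformly bounded away from zero, the positivity of $1-\mu^2\lambda_n^2/\lambda_{n+1}^2$ for large $n$, and the identification of the limit point $u^\ell$, is already handled by Lemmas \ref{lll}--\ref{555} and Theorem \ref{th3.1}, so no new technical input is required beyond the $\alpha=0$ specialization of (A5').
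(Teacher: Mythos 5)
Your proposal is correct and follows essentially the same route as the paper: specialize (\ref{3.31}) with the $\alpha=0$ form of (A5'), lower-bound $\lambda_n$ by $\tilde\lambda$, and absorb the two squared-distance terms into $\tfrac12\|u_n-u^\ell\|^2$ via $\|u_n-u^\ell\|^2\leq 2\|u_n-z_n\|^2+2\|z_n-u^\ell\|^2$ to obtain a uniform contraction factor. The only (harmless) difference is that you bound $1-\mu^2\lambda_n^2/\lambda_{n+1}^2$ below by $(1-\mu^2)/2$ for large $n$ using the limit (\ref{3.32}), which is in fact a slightly more careful justification than the paper's direct replacement by $1-\mu^2$.
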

\begin{proof}
Since $u_n\rightharpoonup u^\ell$ and $\|u_n-z_n\|\to0$ as $n\to\infty$, we obtain $z_n\rightharpoonup u^\ell$.
And by the fact that $\{z_n\}\subset C$, it is clear that assumption \emph{(A5')} holds. Thus, when $\alpha=0$ in assumption \emph{(A5')}, we have $\forall n\geq N'>N$,
\begin{equation}\label{329}
  \langle F(z_n), z_n-u^\ell\rangle\geq \tilde{c}\|z_n-u^\ell\|^{2}.
\end{equation}
According to (\ref{3.32}), we get for all $n\geq N'> N$, $1-\frac{\mu^2\lambda_n^2}{\lambda_{n+1}^2}
>\frac{1-\mu^2}{2}>0$. Therefore, it follows from (\ref{3.31}) and (\ref{329}) that for all $n\geq N'$,
\begin{align*}
  \|u_{n+1}-u^\ell\|^2\leq&\|u_{n}-u^\ell\|^2-(1-\frac{\mu^2\lambda_n^2}{\lambda_{n+1}^2})\|z_n-u_n\|^2
  -2\lambda_n\tilde{c}\|z_n-u^\ell\|^{2}\notag\\
  \leq&\|u_{n}-u^\ell\|^2-\frac{1-\mu^2}{2}\|z_n-u_n\|^2
  -2\lambda_n\tilde{c}\|z_n-u^\ell\|^{2}\notag\\
  \leq&\|u_{n}-u^\ell\|^2-\frac{1}{2}\min\{\frac{1-\mu^2}{2},2\tilde{\lambda}\tilde{c}\}
  \|u_{n}-u^\ell\|^2\notag\\
  \leq&(1-\frac{1}{2}\min\{\frac{1-\mu^2}{2},2\tilde{\lambda}\tilde{c}\})
  \|u_{n}-u^\ell\|^2,
\end{align*}
where $\tilde{\lambda}$ is the lower bound of $\{\lambda_n\}$. Thus, $1-\beta:=1-\frac{1}{2}\min\{\frac{1-\mu^2}{2},2\tilde{\lambda}\tilde{c}\}\in(0,1)$.\\
Thus, for all $n\geq N'$,
\begin{equation*}
  \|u_{n+1}-u^\ell\|^2\leq(1-\beta)\|u_n-u^\ell\|^2,
\end{equation*}\
i.e.,
\begin{equation*}
  \|u_{n+1}-u^\ell\|\leq\sqrt{(1-\beta)}\|u_n-u^\ell\|.
\end{equation*}\
Therefore, the conclusion holds.

\end{proof}

\section{Numerical Experiments}\label{sec55}
In this section, the effectiveness and superiority of the algorithm under the conditions presented in this paper are demonstrated by presenting numerical experiments. ``Iter'' is the number of times the iteration process is carried out, and ``CPU Time'' indicates the computational time measured in seconds. All codes were written by MATLAB R2022a and the computations were performed on a PC equipped with an Intel(R) Core(TM) i7-6700 CPU @ 3.40 GHz 3.41 GHz and 16.00 GB of RAM.

\textbf{Example 6.1.} Let $C=[-1,1]$ and
\begin{equation*}
  F(z)=(1-|z|)z.
\end{equation*}
It follows that $F$ is a quasimonotone and Lipschitz continuous mapping on $C$. And we can get $S=\{-1,0,1\}$ and $S_D=\{0\}$.\\
In Algorithm \ref{AL1}, let $\lambda= 1$, $\mu = 0.3$ and $\xi_n=\frac{100}{(n+1)^{1.1}}$. $u^\ell$ denotes the solution of the problem (\ref{1.1}). We define the stopping criterion as $\text{Error}=\|u_{n + 1}-u_n\|^2$, and set the maximum iteration limit to 500. The proposed algorithm is tested using different initial points $u_1$. The results are shown in the Table \ref{111}.
It is evident that the proposed algorithm is effective under some conditions presented in this paper.
\begin{table}[htp]
\caption{Numerical results of the algorithm for different initial points.}
\begin{tabular}{lllllllll}
\toprule
$u_1$&\multicolumn{2}{c}{\underline{$\text{Error}<10^{-6}$\qquad\qquad}}
&\multicolumn{2}{c}{\underline{$\text{Error}<10^{-8}$\qquad\qquad}}&$u^\ell$\\
&Iter&CPU time&Iter&CPU time&\\
\midrule
0.6 &54&0.0079&73&$0.0082$&0 \\
0.9 &82&0.0070&101&0.0072&0\\
2 &2&0.0023&2&0.0017&-1 \\
3&3&0.0024&3&0.0022&1\\
-3&3&0.0024&3&0.0017&-1\\
random&51&0.0081&70&0.0047&0\\
random&45&0.0034&63&0.0039&0\\
\bottomrule
\end{tabular}
\label{111}
\end{table}

\textbf{Example 6.2.} Let $C=[0,+\infty)$ and
\begin{equation*}
  F(z)=1+\sin(z).
\end{equation*}
Clearly, $F$ satisfies both quasimonotonicity and Lipschitz continuity on $C$. And it follows that $S=\{0\}\cup\{2k\pi+\frac{3\pi}{2},k=0,1,2,\cdots\}$ and $S_D=\{0\}$.\\
The parameters in Algorithm \ref{AL1} are chosen as $\lambda = 1$, $\mu = 0.5$, and $\xi_n = \frac{100}{(n+1)^{1.1}}$. Here, $u^\ell$ refers to the solution of problem (\ref{1.1}).We adopt $\text{Error} = \|u_{n+1} - u_n\|^2$ as the stopping criterion, with a maximum of 500 iterations. The algorithm is evaluated using several different initial values of $u_1$, and the results are summarized in Table \ref{111222}.\\
The results clearly demonstrate the effectiveness of the proposed algorithm under the conditions established in this paper.
\begin{table}[htp]
\caption{Numerical results of the algorithm for different initial points.}
\begin{tabular}{lllllllll}
\toprule
$u_1$&\multicolumn{2}{c}{\underline{$\text{Error}<10^{-6}$\qquad\qquad}}
&\multicolumn{2}{c}{\underline{$\text{Error}<10^{-8}$\qquad\qquad}}&$u^\ell$\\
&Iter&CPU time&Iter&CPU time&\\
\midrule
2 &23&0.0049&29&$0.0053$&0 \\
0.1 &18&0.0051&25&0.0055&0\\
-0.5 &20&0.0046&27&0.0059&0 \\
4&33&0.0061&40&0.0051&0\\
-2&22&0.0039&29&0.0046&0\\
random&21&0.0042&28&0.0049&0\\
\bottomrule
\end{tabular}
\label{111222}
\end{table}

\textbf{Example 6.3.} We consider the following system to apply the algorithm to the signal recovery problem:
\begin{equation}\label{6.1}
  y=Tu+\varepsilon,
\end{equation}
where $y\in\mathbb{R}^M$ is observed signal and $\varepsilon$ is additive noise. The input signal $u$ is mapped to the output signal $y$ by the matrix $T$.
The signal recovery  problem that can be used to solve the problem (\ref{6.1}) is given by: for a constant $\omega>0$,
\begin{equation}\label{6.2}
\min_{u\in\mathbb{R}^N}\frac{1}{2}\|Tu-y\|_2^2\ \text{subject to}\ \|u\|_1\leq\omega.
\end{equation}
The input signal, denoted by $u^\ell$, is generated by randomly placing $K$ nonzero elements (with the value of $\pm1$) in the interval $[-1,1]$.
The sampled data is given by $y=Tu^\ell$, assuming there is no noise present.

For reformulating the problem (\ref{6.2}) as $VI(C,F)$ (\ref{1.1}), we define $C=\{u:\|u\||_1\leq\omega\}$, $\omega=K$ and $F=\nabla g$, where $g(u)=\frac{1}{2}\|Tu-y\|_2^2$. Clearly, $F$ is monotone, and therefore $F$ is also quasimonotone. Let $c(u)=\|u\|_1-\omega$ and $C_n=\{u\in\mathbb{R}^N:c(u_n)\leq \langle \tau_n, u_n-u \rangle\}$. Thus, the projection can be computed by
\begin{equation*}
P_{C_n}(u)=
\begin{cases}
u,&\text{if}\ c(u_n)\leq\langle \tau_n, u_n-u \rangle,\\
u+\frac{\langle\tau_n, u_n-u\rangle-c(u_n) }{\|\tau_n\|^2}\tau_n,&\text{otherwise},
\end{cases}
\end{equation*}
where $\tau_n\in\partial c(u_n)$. And $\partial c(u_n)$ denotes the subdifferential at $u_n$, defined as follows:
\begin{equation*}
{\partial c(u_n)}_i=sign(u_n^{(i)}),
\end{equation*}
where the symbol $sign$ refers to the sign function. And $i$-th component of the vector $u_n$ is denoted by $u_n^{(i)}$.

We set the initial point to be $u_1=(0,\cdots,0)^T\in \mathbb{R}^N$.
For the evaluation of the restoration accuracy, we make use of the following mean squared error:
\begin{equation*}
  \text{Error}=\frac{1}{N}\|u_n-u^\ell\|^2<10^{-6}.
\end{equation*}
We make a comparison between the Algorithm \ref{AL1} presented in this paper and the Algorithm 4.3 from \cite{shehu2019}. For Algorithm \ref{AL1}, we choose $\lambda_1=0.1$, $\mu=0.3$ and $\xi=\frac{100}{(n+1)^{1.1}}$. And for Algorithm 4.3 in \cite{shehu2019}, we choose $\lambda_n=3\times10^{-4}$, $\alpha_n=\frac{1}{10^2n+1}$ and $\gamma=0.5$.

We evaluated the performance of all algorithms under the same conditions by varying $M$, $N$ and $K$ and setting the maximum iteration limit to 2000. The aggregated findings are presented in Table \ref{123}. Additionally, we present the original and recovered signals for cases 2 and 3, shown in Figure \ref{f1}. Figure \ref{222} gives the variation of the ratio with the number of iterations in the condition (A5) presented in this paper, where $\varepsilon=1$ and $Ratio=\frac{|\langle F(z_n), z_n-u^\ell \rangle|}{\|z_n-u^\ell\|^{3}}$. And we can obtain $\inf\limits_{n\in\mathbb{N}}\frac{|\langle F(z_n), z_n-u^\ell \rangle|}{\|z_n-u^\ell\|^{3}}>0$ from Figure \ref{222}.
Finally, we perform a comparison of the Error and the iteration count of all algorithms for the different cases, as illustrated in Figure \ref{fig:12345}. The results demonstrate that our proposed algorithms outperform the others in terms of both accuracy and efficiency.
\begin{table}[htp]
\caption{Numerical results from all algorithms for different $M$, $N$ and $K$.}
\begin{tabular}{lllllllll}
\toprule
Case&M&N&K&\multicolumn{2}{c}{\underline{Algorithm \ref{AL1}\qquad\qquad }}&\multicolumn{2}{c}{\underline{Algorithm 4.3 in \cite{shehu2019}\quad\qquad }}\\
&&&&Iter&CPU time&Iter&CPU time\\
\midrule
Case 1&256&512&20&252&0.5331&835&1.5990 \\
Case 2&256&512&40&371&0.8818&1472&3.0215\\
Case 3&512&1024&60&398&3.0847&757&5.4006 \\
Case 4&512&1024&80&867&6.5861&1856&12.9299 \\
\bottomrule
\end{tabular}
\label{123}
\end{table}
\begin{figure}[htp]
 \begin{minipage}{1\linewidth}
		\subfigure[$M=256,\ N=512,\ K=40$]
{
			\includegraphics[width=0.5\linewidth,height=2.1in]{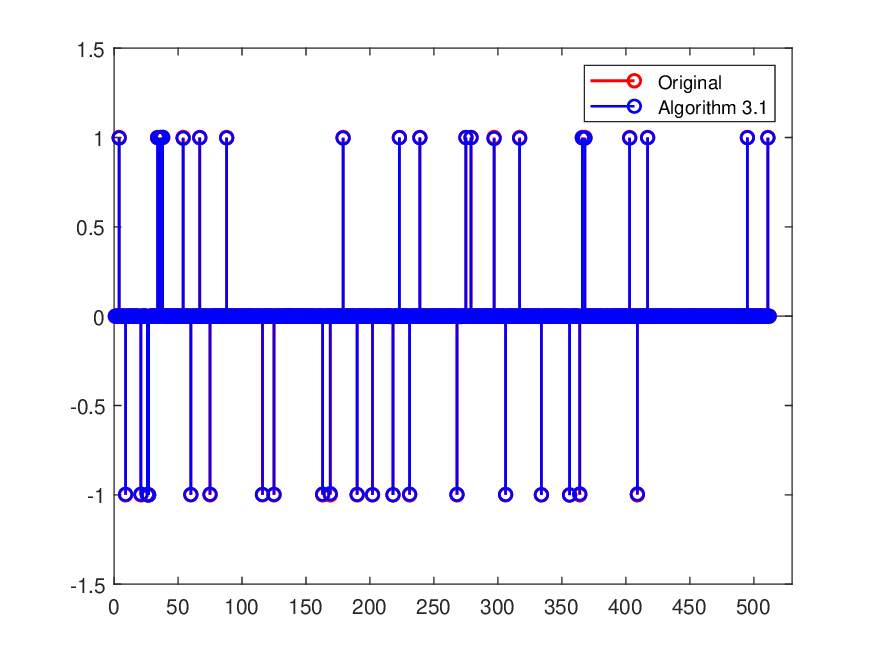}	
		}\noindent
		\subfigure[$M=256,\ N=512,\ K=40$]
{
			\includegraphics[width=0.5\linewidth,height=2.1in]{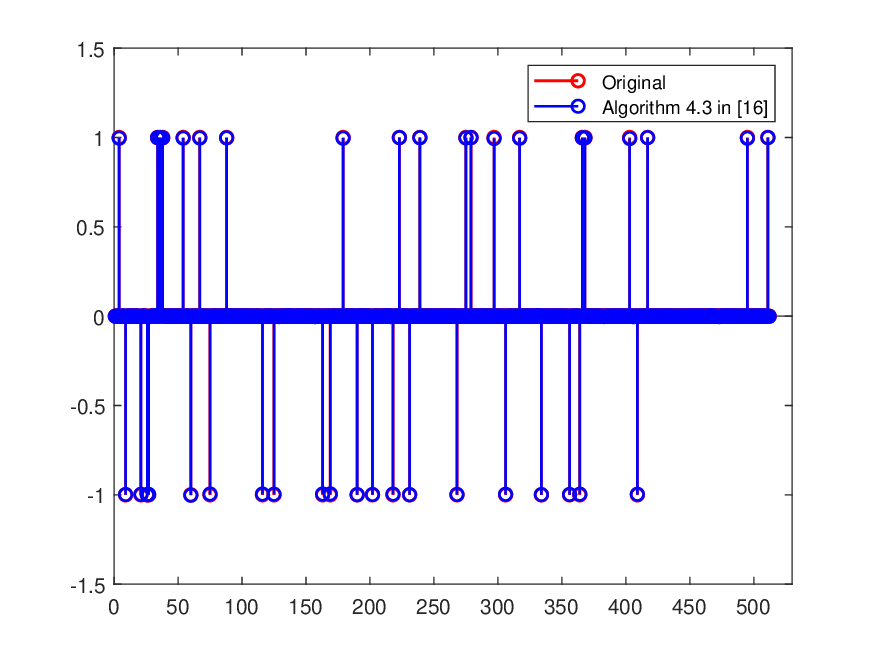}
		}
	\end{minipage}
\begin{minipage}{1\linewidth}	
		\subfigure[$M=512,\ N=1024,\ K=60$]{
			\includegraphics[width=0.5\linewidth,height=2.1in]{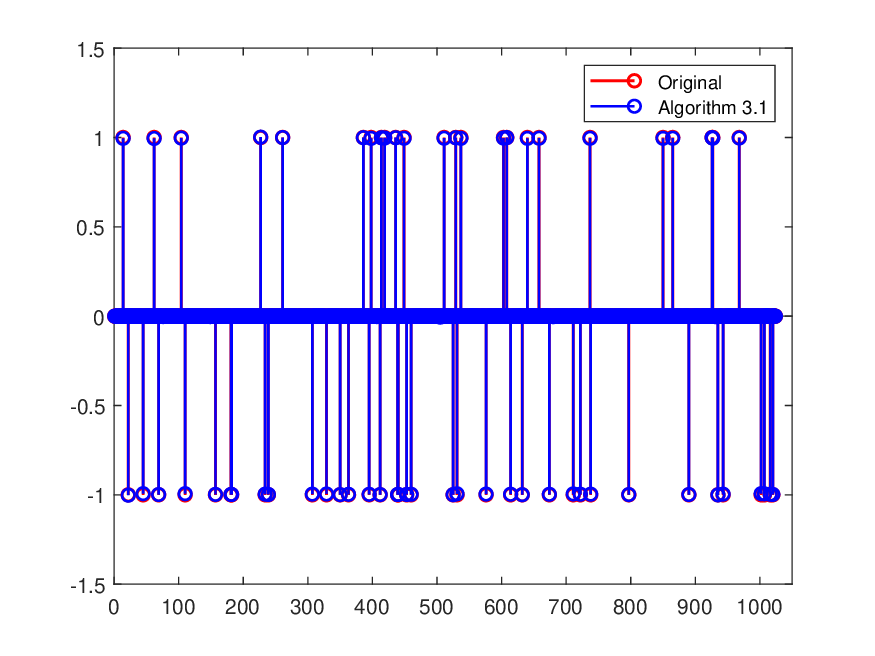}	
		}\noindent
		\subfigure[$M=512,\ N=1024,\ K=60$]{
			\includegraphics[width=0.5\linewidth,height=2.1in]{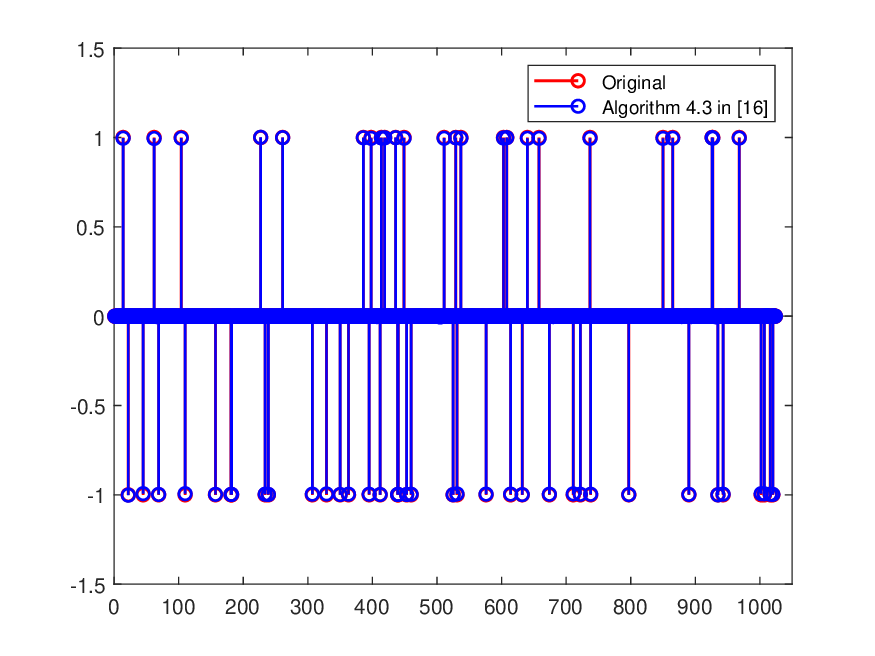}
		}
	\end{minipage}
  \caption{Original signal and recovered signal for different $M$, $N$ and $K$.}
\label{f1}
\end{figure}

\begin{figure}[htp]
	\centering
	\vspace{-0.15in}	
		{
			\includegraphics[width=0.6\linewidth,height=2.6in]{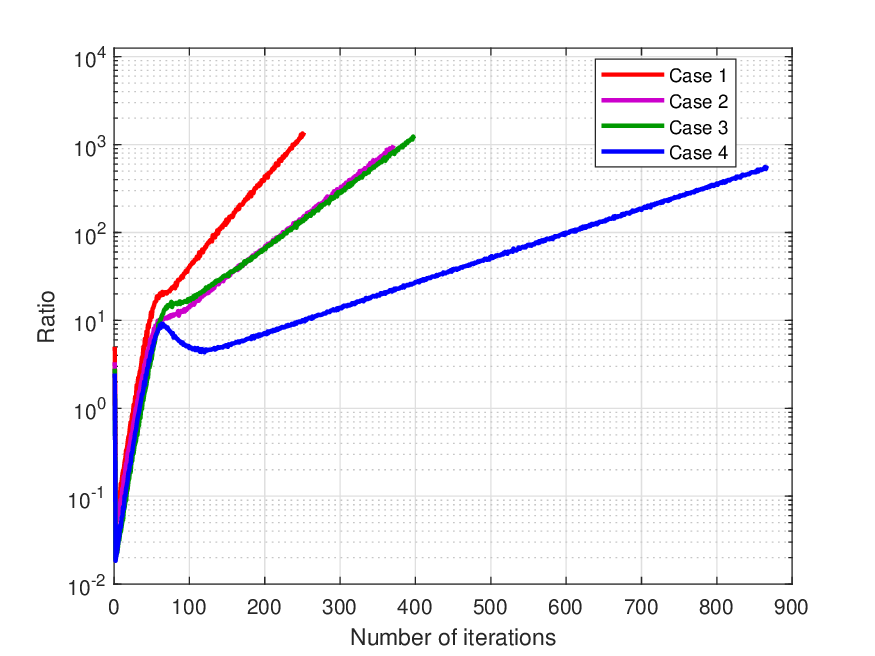}
		}

	\caption{The relationship between the Ratio and the number of iterations.}
	\label{222}
\end{figure}

\begin{figure}[htp]
	\centering
	\vspace{-0.15in}
	\begin{minipage}{1\linewidth}	
		\subfigure[Case 1]{
			\includegraphics[width=0.5\linewidth,height=2.1in]{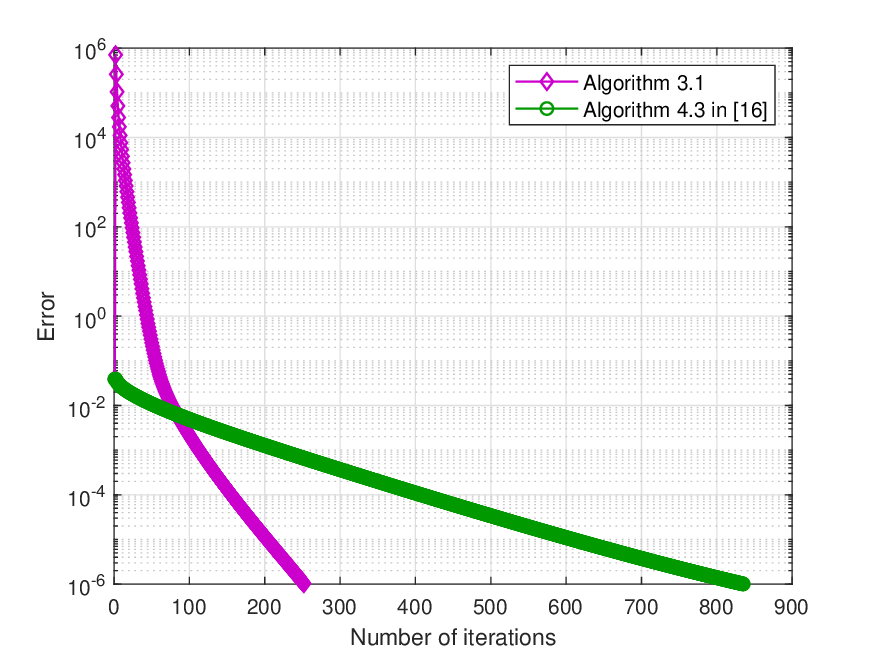}	
		}\noindent
		\subfigure[Case 2]{
			\label{fig:Our 2-AMSIA}
			\includegraphics[width=0.5\linewidth,height=2.1in]{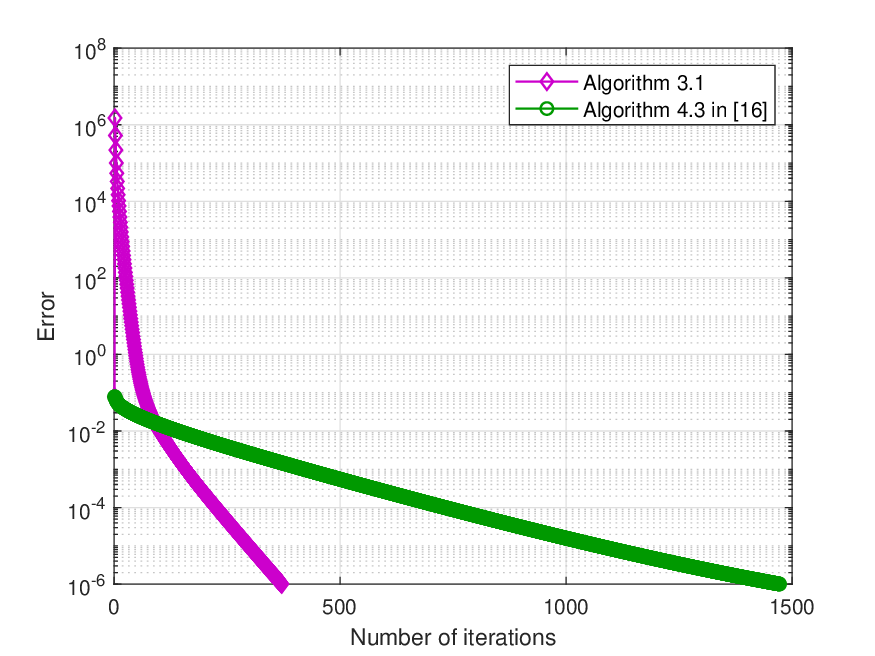}
		}
	\end{minipage}
	\vskip -0.3cm
	\begin{minipage}{1\linewidth }
		\subfigure[Case 3]{
			\label{fig:Our 3-AMSIA}
			\includegraphics[width=0.5\linewidth,height=2.1in]{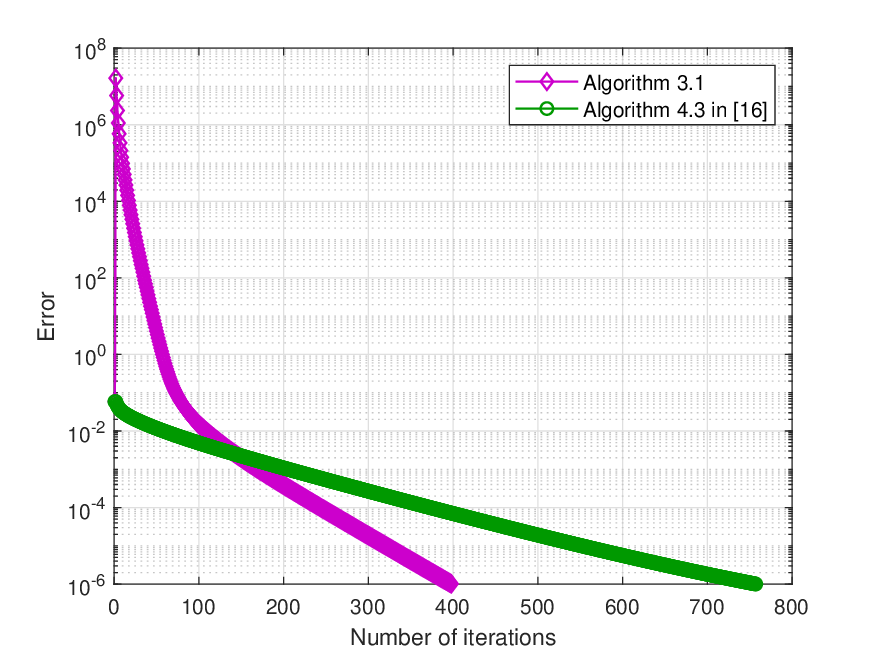}
			
		}\noindent
		\subfigure[Case 4]{
			\label{fig:JSKS Alg}
			\includegraphics[width=0.5\linewidth,height=2.1in]{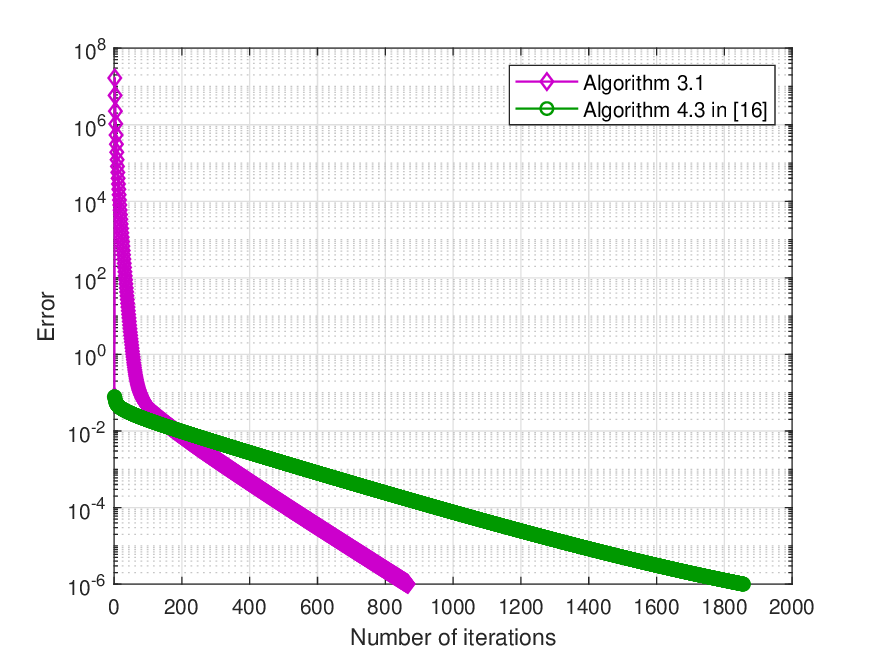}
			}
		
	\end{minipage}

	\caption{Comparison of Error and number of iterations.}

	\label{fig:12345}
\end{figure}

\section{Conclusion}\label{sec6}
This paper investigates the variational inequality problem with a quasimonotone operator based on the self-adaptive Tseng's extragradient algorithm proposed in \cite{yang3}. We improve the condition in \cite{yang3}, where $A=\{p\in C: F(p)=0\}\backslash S_D$ is assumed to be a finite set, to make it applicable to cases where the set contains infinitely many points. Subsequently, we establish the result that the sequence generated by the algorithm converges strongly to a point in the solution set \( S \) of the variational inequality problem. Furthermore, we analyze the sublinear convergence and $Q$-linear convergence rate under some mild conditions. In order to verify the validity and implementability of the proposed algorithm and condition, we conducted numerical experiments. The results in this paper generalize and improve the relevant literature on quasimonotone variational inequality problems.

\bmhead{Acknowledgements}
The authors express their appreciation to the responsible editor and anonymous reviewers for their careful reading.

\bmhead{Funding} This work was supported by the National Natural Science Foundation of China (No. 12261019), Natural Science Basic Research Program Project of Shaanxi Province (No. 2024JC-YBMS-019), the Natural Science Foundation of Shaanxi Province of China (No. 2023-JC-YB-049), the Fundamental Research Funds for the Central Universities, and the Innovation Fund of Xidian University (No. YJSJ25009).
\section*{Data Availability} Data sharing is not applicable to this article as no datasets were generated or analyzed during the current study.
\section*{Declarations}

\textbf{Conflict of interest} The authors declare no conflicts of interest.

\bigskip
\bibliography{sn-bibliography}

\end{document}